\documentclass[11pt]{amsart}

\usepackage{amsmath,amssymb,amsthm,multirow,hyperref,enumitem,tikz}
\usepackage{fullpage}

\usetikzlibrary{decorations.pathmorphing,decorations.pathreplacing,calc}

\newtheorem{lemma}{Lemma}[section]
\newtheorem{corollary}[lemma]{Corollary}
\newtheorem{question}[lemma]{Question}

\newtheorem{theorem}[lemma]{Theorem}
\newtheorem{example}[lemma]{Example}
\begin{document}

\title[Orbit dimension]{Some remarks on the orbit dimension of transitive groups and on the metric dimension of Johnson graphs}  

\author[A.~Drera]{Alice Drera}
\address{Scuola Galileiana di Studi Superiori, Universit\`a di Padova; Dipartimento di Matematica ``Tullio Levi-Civita'', Universit\`a di Padova,  Via Trieste 63, 35121 Padova, Italy}
\email{alice.drera@studenti.unipd.it}

\author[P.~Spiga]{Pablo Spiga}
\address{Dipartimento di Matematica e Applicazioni, University of Milano-Bicocca, Via Cozzi 55, 20125 Milano, Italy} 
\email{pablo.spiga@unimib.it}
\begin{abstract}The orbit dimension $\sigma(G)$ (also called the separation number or rigidity index) of a permutation group $G$ with domain $\Omega$ is the minimum cardinality of a subset $S \subseteq \Omega$ such that, for any two distinct elements $\omega,\omega'\in \Omega$, there exists $\alpha\in S$ for which $\omega$ and $\omega'$ lie in distinct orbits of the stabilizer $G_\alpha$.

In this paper, we first observe that if $G$ is transitive, then
$\sigma(G)\le |\Omega|-r+1$,
where $r$ is the rank of $G$, and we obtain strong structural information on the groups for which equality holds.

Next, we investigate the orbit dimension in the case where $G$ is the symmetric group of degree $n$, acting on the set of $k$-subsets of $\{1,\ldots,n\}$. In this case, this invariant equals the metric dimension of Johnson graphs.
\keywords{separation number, separation index, Johnson graph, metric dimension}
\end{abstract}
\subjclass[2010]{Primary 05B25; Secondary 05C25}
\maketitle

\section{Introduction}
Let $G$ be a permutation group on a finite set $\Omega$.
Given $\omega,\omega',\alpha\in \Omega$, we say that $\alpha$
\textit{\textbf{separates}} $\omega,\omega'$ if $\omega$ and $\omega'$
belong to distinct orbits of the stabilizer $G_\alpha$ of $\alpha$.
A subset $S$ of $\Omega$ is said to be \textit{\textbf{separating}} for $G$
if, for any two distinct elements $\omega,\omega'\in \Omega$, there exists
$\alpha\in S$ separating $\omega,\omega'$.
We say that $S$ is an \textit{\textbf{irredundant separating}} set if $S$ is
separating and if $S\setminus\{\alpha\}$ is not separating for each
$\alpha\in S$.
The minimum cardinality of a separating set for $G$ is said to be the
\textit{\textbf{orbit dimension}} of $G$, and it is denoted by $\sigma(G)$.

As far as we are aware, this invariant has appeared in the literature only a few times: in an unpublished note by Cameron~\cite{cameron}, which was later extended in a journal article in~\cite{bayley} with Bailey, and in two papers by Fijav\v{z} and Mohar~\cite{FF,F}.  
Cameron's note contains several interesting remarks on the orbit dimension and its relationship with the base size of a permutation group, as well as some conjectures aimed at extending classical results on base sizes to this new invariant.  In the unpublished note, the orbit dimension was named separation number.

The point of view adopted by Fijav\v{z} and Mohar (who use the term \textit{\textbf{rigidity index}}) is graph-theoretic. In fact, they study the separation number when $G$ is the automorphism group of a graph, motivated by connections between separating sets and colouring problems. This perspective is very natural, since the orbit dimension (a.k.a. separation number) is closely related to the \textit{\textbf{metric dimension}} of a graph. Recall that the metric dimension of a connected graph $\Gamma$ is the minimum cardinality of a set $S$ of vertices such that, for any pair of vertices $u,w$, there exists $s\in S$ with $d(u,s)\ne d(w,s)$,
where $d(x,y)$ denotes the distance between the vertices $x$ and $y$. For distance-transitive graphs, the orbit dimension (a.k.a. rigidity index) equals the metric dimension.
For further work on the metric dimension, see for instance~\cite{bayley,bayleyy}. Because of the ambiguity in the literature on the name of this invariant, we use orbit dimension when dealing with general permutation groups and we switch to metric dimension (which is more widely used) when possible.

Independently of whether one adopts a permutation-group-theoretic or a graph-theoretic point of view, we find the orbit dimension to be a particularly interesting invariant, both because of the difficulty and complexity of obtaining structural information about it and because of its close relationship with the lattice of partitions (see Section~\ref{sec:2} for details).

The scope of this paper is twofold. First, we re-establish a bound relating the rank of a transitive permutation group to its separation number, and we characterize the groups attaining this bound.

\begin{theorem}\label{l:3}
Let $G$ be a transitive group of degree $n$ and rank $r$. Then $\sigma(G)\le n-r+1$. Moreover,
if $\sigma(G) = n - r + 1$ and $G$ is not regular, then $G$ admits a system of imprimitivity $\Sigma$ consisting of
$r-1$ blocks such that the setwise
stabilizer of each block acts $2$-transitively on the block and the induced action of $G$
on $\Sigma$ is regular.
\end{theorem}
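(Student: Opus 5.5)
The upper bound comes from an explicit construction. Fix $\alpha\in\Omega$. The stabilizer $G_\alpha$ has $r$ orbits, one of them being $\{\alpha\}$; choose one representative $\beta_i$ from each of the $r-1$ nontrivial orbits and put $S=\Omega\setminus\{\beta_1,\dots,\beta_{r-1}\}$, so that $|S|=n-r+1$. We check that $S$ is separating. Take a pair $\omega\ne\omega'$. If one of them, say $\omega$, lies in $S$, then $\omega$ separates the pair, because $\{\omega\}$ is a $G_\omega$-orbit not containing $\omega'$. Otherwise $\{\omega,\omega'\}=\{\beta_i,\beta_j\}$ with $i\ne j$, and $\alpha\in S$ separates them. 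Hence $\sigma(G)\le n-r+1$.

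For the equality case I would exploit the flexibility in this construction. Since $G$ is transitive but not regular, $G_\alpha\ne 1$ and so $r<n$; hence some nontrivial $G_\alpha$-orbit $O_1$ has two points $\beta_1\ne\beta'$. Suppose some $\gamma\notin\{\beta_1,\beta'\}$ separates $\beta_1$ and $\beta'$; then $\gamma\ne\alpha$. Remove $T=\{\beta_1,\dots,\beta_{r-1},\beta'\}$ from $\Omega$ instead; we may take $\gamma\notin T$, or else adjust the choice of representatives. Then every pair inside $T$ is separated by $\alpha$ or by $\gamma$, so $\sigma(G)\le n-r$. Therefore equality forces the following condition $(\ast)$: for every $\alpha$ and every two points $\beta,\beta'$ in a common $G_\alpha$-orbit, the only points separating $\beta$ and $\beta'$ are $\beta$ and $\beta'$ themselves. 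Define $\beta\approx\beta'$ if $\beta=\beta'$ or no third point separates them. This relation is clearly $G$-invariant.

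The main obstacle is showing that $\approx$ is transitive, and hence a $G$-congruence whose classes form a block system $\Sigma$. I would attempt this using $(\ast)$ directly. Suppose $\beta\approx\beta'\approx\beta''$. The only points that could separate $\beta$ from $\beta''$ are $\beta'$, $\beta$ and $\beta''$. Now $G_{\beta'}$ fixes $\beta'$; by $(\ast)$ applied inside $G_{\beta}$-orbits and $G_{\beta''}$-orbits, one should get that $\beta,\beta''$ lie in a common $G_{\beta'}$-orbit. If this fails, I would instead define the block directly as $B=\{\alpha\}\cup O$ for a nontrivial orbit $O$ and show $B^g\cap B\in\{\emptyset,B\}$ via $(\ast)$.

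Once $\Sigma$ exists, with block $B\ni\alpha$, every other point of $B$ lies in a nontrivial $G_\alpha$-orbit inside $B$. Points of distinct blocks are separated by suitable third points, so the $G_\alpha$-orbits contained in $B\setminus\{\alpha\}$ and those in the other blocks are distinct. Counting orbits gives $r\ge 1+|\Sigma|$, with equality exactly when $G_\alpha$ is transitive on $B\setminus\{\alpha\}$ (so $G_B$ is $2$-transitive on $B$), $G_\alpha$ is transitive on every other block, and $G_\alpha$ fixes every block. Being transitive with point stabilizers trivial on $\Sigma$, the action on $\Sigma$ is regular. Finally, to exclude $r>1+|\Sigma|$, I would apply $(\ast)$ inside a block once more to push $\sigma(G)$ below $n-r+1$, and this would give $|\Sigma|=r-1$.
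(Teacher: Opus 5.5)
Your route differs from the paper's. The paper turns equality into the partition condition $(\dagger)$: for $G_\omega\ne G_{\omega'}$ the meet $\pi_\omega\wedge\pi_{\omega'}$ has exactly $r+1$ parts, which is essentially your $(\ast)$. It then tracks $G_\omega$-orbit sizes (types~1 and~2, divisibility arguments) and obtains $\Sigma$ as the orbits of the normal subgroup $\langle G_\omega,G_{\omega'}\rangle$. You instead build $\Sigma$ from the relation $\approx$. Your construction for the bound is correct, and so is your derivation of $(\ast)$. When adjusting representatives, note that if $G_\alpha$ fixes $\gamma$ then $G_\gamma=G_\alpha$, so $\gamma$ cannot separate $\beta_1,\beta'$; otherwise $\gamma$'s $G_\alpha$-orbit contains another point you can use.

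The real content comes after $(\ast)$, and there your proposal has genuine gaps. Transitivity of $\approx$ is only hoped for, and your fallback fails. Take the dihedral group of order $8$ on the vertices $1,2,3,4$ of a square, which attains equality ($n=4$, $r=3$, $\sigma=2$). The only $G_1$-orbit of size at least $2$ is $\{2,4\}$, and $\{1,2,4\}$ is not a block. The missing idea is a fourth point. Suppose $\beta,\beta',\beta''$ are distinct with $\beta\approx\beta'\approx\beta''$, and pick $\gamma\notin\{\beta,\beta',\beta''\}$. Then $\gamma$ separates neither pair, so $\beta,\beta',\beta''$ lie in one $G_\gamma$-orbit. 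Now $(\ast)$ applied to $G_\gamma$ says $\beta'$ does not separate $\beta,\beta''$. For $n\le 3$ the only nonregular transitive group is $\mathrm{Sym}(3)$, which is $2$-transitive.

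Your last step (``apply $(\ast)$ inside a block once more to push $\sigma(G)$ below $n-r+1$'') is unspecified, and no further use of equality is needed. By the definition of $\approx$, a class $C$ lies in a single $G_\gamma$-orbit if $\gamma\notin C$, and $C\setminus\{\gamma\}$ does if $\gamma\in C$. By $(\ast)$, every $G_\gamma$-orbit lies inside a class. Hence the $G_\gamma$-orbits are exactly $\{\gamma\}$, $C_\gamma\setminus\{\gamma\}$, and the other classes. Classes have size at least $2$, since $G_\gamma\ne 1$ has an orbit of size at least $2$ and $G$ permutes the classes transitively. This gives $|\Sigma|=r-1$ and the $2$-transitivity of $G_C$ on $C$ at once. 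It also shows that $G_\gamma$ fixes every block, which you listed as a condition rather than proved.

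Finally, ``transitive with point stabilizers acting trivially on $\Sigma$'' does not imply regularity on $\Sigma$. For instance, take $\mathrm{Sym}(3)$ acting regularly, with blocks the cosets of a subgroup of order $2$. You need the kernel $K$ of the action on $\Sigma$ to be transitive on each block, so that $G_C=KG_\delta=K$ for $\delta\in C$. This holds because every $G_\gamma$ lies in $K$ and is transitive on each class not containing $\gamma$ (when $|\Sigma|\ge 2$; otherwise there is nothing to prove).

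With these repairs your argument is complete, and it is more robust than the paper's. In the square example the blocks have size $2$ and $G_1$ fixes both $1$ and $3$. This contradicts the paper's intermediate claim that $G_\omega$ fixes only $\omega$, whereas $\approx$ yields $\Sigma=\{\{1,3\},\{2,4\}\}$ directly.
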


Second, in order to build some intuition for the orbit dimension in the case of primitive groups, and in particular in relation to the conjectures proposed in~\cite{cameron} between orbit dimension and base size, we investigate the orbit dimension of the symmetric group acting on $k$-subsets. In this context, we let $\sigma(m,k)$ denote the orbit dimension of $\mathrm{Sym}(m)$ in its action on the $k$-subsets of $\{1,\ldots,m\}$. Observe that in this context $\sigma(m,k)$ equals the metric dimension of the \textbf{\textit{Johnson graph}}. The vertices of the Johnson graph $J(m,k)$ are the $k$-subsets of an $m$-set; two vertices are adjacent when their intersection contains $k-1$ elements. The metric dimension of Johnson graphs has already been studied in~\cite{baileyyy}, focusing on obtaining upper bounds on $\sigma(m,k)$. Here we improve these upper bounds and also obtain some strong lower bounds. 

\begin{theorem}\label{thrm:main}
Let $m$ be a positive integer and let $G=\mathrm{Sym}(m)$ be the symmetric group of degree $m$ in its action on the $k$-subsets of $\{1,\ldots,m\}$, where $k\ge 3$. Let 
\[
k'=\frac{4\big((k-1)^{2k}-1\big)}{k-2},
\]
and let $q,r\in\mathbb{N}$ be such that $m=k'q+r$ with $r<k'$. Then
\[
\frac{6m-8}{3k+1}\le \sigma(m,k)\le \frac{2k'q}{k}+
\left(
\left\lfloor\frac{k}{2}\right\rfloor+1\right)r\le \frac{2m}{k}+\left(\left\lfloor\frac{k}{2}\right\rfloor+1\right)(k'-1).
\]
\end{theorem}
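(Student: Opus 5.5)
For a $k$-subset $A$, the stabilizer $G_A=\mathrm{Sym}(A)\times\mathrm{Sym}(\Omega\setminus A)$ has as orbits on $k$-subsets the sets $\{B: |A\cap B|=i\}$. So a set $S$ of $k$-subsets is separating exactly when the vector $(|A\cap B|)_{A\in S}$ determines $B$. This is the usual resolving-set description for $J(m,k)$, and the whole proof works with it.

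\emph{Upper bound.} I would first build, for the fixed block size $N=k'$, a family $\mathcal{F}$ of $2k'/k$ $k$-subsets of an $N$-set $X$. Here $k\mid 4((k-1)^{2k}-1)/(k-2)\cdot$ suitably, and one checks that $2k'/k$ is an integer. The family must be \emph{locally resolving}: for every $k$-subset $B$, the vector $(|A\cap B|)_{A\in\mathcal F}$ determines $B\cap X$. The natural candidate is a union of two partitions of $X$ into blocks of size $k$ (hence $2N/k$ sets) that are arranged so that the bipartite ``intersection pattern'' is rigid.

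Next, partition $\{1,\dots,m\}$ into $q$ blocks of size $k'$ together with a remainder $R$ of size $r$. Put a copy of $\mathcal{F}$ on each block, and cover $R$ by $(\lfloor k/2\rfloor+1)r$ further subsets. For example, for each $x\in R$ take $\lfloor k/2\rfloor+1$ subsets containing $x$ together with points of a fixed block, chosen so that membership of $x$ in $B$ can be read off by comparing intersection numbers. Since the numbers $|B\cap X_j|$ sum to $k$, knowing each $B\cap X_j$ together with $B\cap R$ determines $B$. The constant $k'$ is exactly what lets a counting or probabilistic argument work: the number of possible traces $B\cap X$ with $|B\cap X|\le k$ has to be compared with the number of intersection vectors, roughly $(k+1)^{2N/k}$ against $\binom{N}{\le k}$, so $(k-1)^{2k}$-type quantities appear. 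The last inequality is arithmetic: $k'q\le m$ and $r\le k'-1$.

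\emph{Lower bound.} I would double count. Let $S$ be separating with $|S|=s$ and $U=\bigcup S$. At most one point lies outside $U$, since two uncovered points $x,y$ can be swapped inside any $B$. Next, for each pair of points $x,y$, some set of $S$ must contain exactly one of them. Otherwise, whenever $B\ni x$ and $B\not\ni y$, swapping $x$ and $y$ preserves all intersection numbers. Hence the incidence vectors of the points with respect to $S$ are pairwise distinct. A sharper form of this argument considers $B$ and $B'=B\triangle\{x,y,z,w\}$, obtained by swapping two points for two others. This forces many points to lie in at least two members of $S$: the points of multiplicity $1$ in a given set $A$ are at most about one or two. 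Counting incidences then gives $ks\ge \sum_x \deg(x)\ge 2m-(\text{points of degree}\le 1)$, and the degree-$\le1$ points number at most $s+1$ or so. This would give an inequality of the form $(3k+1)s\ge 6m-8$ once the degree-one and degree-two points are bounded precisely. I expect the main obstacle to be pinning down that exact local structure, namely how many points of low degree are compatible with separation via double swaps, so as to reach the constant $6/(3k+1)$. On the upper side, the hard step is the explicit verification that $\mathcal F$ is locally resolving.
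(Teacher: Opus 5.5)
Your overall architecture matches the paper's in both halves, but each half is missing the idea that actually carries the argument, so neither bound is established as written.

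\emph{Upper bound.} You never produce $\mathcal F$. Comparing $(k+1)^{2N/k}$ with $\binom{N}{\le k}$ is only a necessary condition for local resolvability, and for $N=k'$ it holds with an enormous margin. So it gives no existence statement, and it is not where $k'$ comes from. The missing idea is that your ``two partitions of $X$ into $k$-blocks'' is exactly the edge--vertex incidence structure of a $k$-regular (bipartite) graph $\Delta$: points are edges and sets are vertex stars, so sets/points $=|V(\Delta)|/|E(\Delta)|=2/k$. The rigidity you need is precisely girth at least $2k+1$. Indeed, suppose distinct edge sets $T\ne T'$ with $|T|,|T'|\le k$ had the same vertex-degree vector. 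Then so would $T\setminus T'$ and $T'\setminus T$. Their union has at most $2k$ edges, hence is a nonempty forest, and at a leaf $v$ the two $v$-coordinates are $1$ and $0$. This gives the paper's robust separating set. A $k$-regular graph of girth at least $2k+1$ on at most $4((k-1)^{2k}-1)/(k-2)=4\sum_{t=0}^{2k-1}(k-1)^t$ vertices exists by the Erd\H{o}s--Sachs theorem, and that is where $k'$ comes from. If you insist on two partitions, take the bipartite double cover, which has no smaller girth. Note also that the block size this yields is $|E(\Delta)|=k|V(\Delta)|/2$, not literally $k'$; the paper is loose on this point too. Gluing copies via Lemma~\ref{lemma:robust} and absorbing the remainder via Lemma~\ref{lemma:plusone} then proceeds as you describe.

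\emph{Lower bound.} Let $d_j$ be the number of points lying in exactly $j$ members of $S$. Counting with threshold $2$, as you propose, gives $ks\ge 2m-2d_0-d_1\ge 2m-2-s$. That is only $s\ge 2(m-1)/(k+1)$, strictly weaker than $(6m-8)/(3k+1)$. You must count with threshold $3$, namely $ks\ge 3m-(3d_0+2d_1+d_2)$, and then you need the linear bound $d_2\le 1+\frac{k+1}{2}(s-d_1)$, which your sketch does not provide.

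The paper gets this bound from the graph $\Gamma$ on $S$ whose edges are the weight-$2$ points; $\Gamma$ is simple with maximum degree at most $k$. Let $A$ be the members of $S$ containing a weight-$1$ point (so $|A|=d_1$), and let $B=S\setminus A$. A $3$-versus-$3$ relation (one weight-$2$ and two weight-$1$ points on each side) shows that at most one edge of $\Gamma$ lies inside $A$. A $2$-versus-$2$ relation shows that each vertex of $B$ has at most one neighbour in $A$. Hence $e(A,B)\le |B|$ and $e(A,B)+2e(B,B)\le k|B|$, so $2d_2\le 2+(k+1)|B|$. Substituting gives $ks\ge 3m-4-\frac{k+1}{2}s+\frac{k-3}{2}d_1$, and $k\ge 3$ finishes. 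Relations among fewer than $k$ points are legitimate by Lemma~\ref{lemma:sepsub}.

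The consequence you draw from double swaps, a bound on multiplicity-one points per set, already follows from single swaps and says nothing about $d_2$. Your argument therefore stops exactly at the step you flag as the main obstacle.
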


We record some remarks concerning Theorem~\ref{thrm:main}.
\begin{itemize}
\item We exclude the cases $k\in\{1,2\}$, since $\sigma(m,1)$ and $\sigma(m,2)$ are known explicitly (see~\cite{bayley,cameron}).

\item The upper bound improves asymptotically on the bound $\lfloor k(m+1)/(k+1)\rfloor$ obtained in~\cite{baileyyy}.

\item The lower bound is stronger than the bound obtained by estimating $\sigma(m,k)$ via the base size of $\mathrm{Sym}(m)$ acting on $k$-subsets using the results of Halasi~\cite{halasi}, which yields $\lfloor 2(m-1)/(k+1)\rfloor$.
\item Theorem~\ref{thrm:main} implies that $\sigma(m,k)/m \to 0$ as $k\to\infty$, a feature that does not follow directly from the existing literature.
\end{itemize}

Determining an explicit formula for $\sigma(m,k)$ appears to be difficult.  We refine Theorem~\ref{thrm:main} in the case $k=3$.
\begin{theorem}\label{thrm:main2}
Let $m$ be a positive integer and let $q,r\in\mathbb{N}$ with $m=13q+r$ and $0\le r<13$. Then
\[
\frac{36m-50}{59}\le \sigma(m,3)\le 8q+2r\le \frac{8m}{13}+24.
\]
\end{theorem}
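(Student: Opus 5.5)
For $k=3$ I would treat the two bounds separately, and obtain the upper bound by a block-decomposition construction. The set $S$ consists of 3-subsets of $\{1,\dots,m\}$. For two distinct 3-subsets $A,B$, the set $\alpha$ separates them exactly when $|A\cap\alpha|\ne|B\cap\alpha|$, since the orbits of $G_\alpha$ on 3-subsets are determined by the size of the intersection with $\alpha$. The task is therefore to build a resolving family of triples.

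\textbf{Upper bound.} I would partition $\{1,\dots,m\}$ into $q$ blocks of size $13$ and a remainder $R$ of size $r$. On each 13-block $B_i$ I would choose $8$ triples, and on $R$ I would use $2$ triples per point (for instance triples pairing each remainder point with fixed points of $B_1$, or a direct small construction). This gives $|S|=8q+2r$. The final inequality then follows from $8q=8(m-r)/13$, which yields $8q+2r=8m/13+(2-8/13)r\le 8m/13+18\cdot 12/13<8m/13+24$.

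The key local requirement is that the $8$ triples inside a 13-block determine, for every subset $X\subseteq B_i$ with $|X|\le 3$, the set $X$ itself. Here the data are the vectors $(|X\cap\alpha|)_{\alpha}$. The local requirement is stronger than ordinary resolvability, because a global 3-set $A$ meets the blocks in pieces of sizes summing to $3$. If every block code recovers its piece $A\cap B_i$ exactly (including its size), then $A$ is recovered. A point of $R$ must be recovered from the triples touching it.

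I would find the $8$ triples on $13$ points by an explicit search or design-like pattern and verify injectivity on the $\sum_{j\le3}\binom{13}{j}=378$ small subsets. One subtlety: different blocks cannot distinguish a piece of size $0$ from one being empty in another block, so injectivity must include $X=\emptyset$. This is the same mechanism presumably used for the general Theorem~\ref{thrm:main} with block size $k'$, so I would mirror that proof.

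\textbf{Lower bound.} This is the main obstacle. I would use a counting/weighting argument: each point $x$ must be covered sufficiently. If two points $x,y$ lie in exactly the same members of $S$, then choosing a pair $P$ disjoint from $x,y$ and from all sets in $S$ that meet it badly, the sets $P\cup\{x\}$ and $P\cup\{y\}$ are unresolved. More carefully, I would classify points by the number of elements of $S$ containing them:
\begin{itemize}
\item at most one point lies in no set;
\item points lying in exactly one set must be controlled, since points in one common set conflict;
\item and so on.
\end{itemize}
Summing gives $3|S|=\sum_x \deg(x)$, and the local constraints force the average degree to be roughly $36\cdot3/59$. Obtaining the precise constants $36/59$ and $50$ requires a discharging argument. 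Points of degree $1$ get charge from nearby degree-$\ge2$ points, and the configurations must be analyzed case by case, exploiting swaps $P\cup\{x\}$ versus $P\cup\{y\}$ and also double swaps such as $\{x,y,z\}$ versus $\{x',y',z\}$. I expect this case analysis, finding which configurations of low-degree points are forbidden, to be the hardest step.
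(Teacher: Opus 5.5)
\paragraph{The upper bound fails.} The local object your upper bound needs does not exist. Suppose the $8$ triples in a $13$-block determine every $X$ in the block with $|X|\le 3$. Then they are in particular a robust separating set for the $3$-subsets of the block. But Table~\ref{table:0} gives $\sigma^\ast(13,3)=9$, so Lemma~\ref{lemma:robust} applied to $13$-blocks yields only $9q$.

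Your condition is in fact stronger than robustness, because it also forbids collisions between sets of different sizes. Use the notation of Section~\ref{sec:adja}, with $n$ points and $\ell$ triples. Your condition forces $d_0=0$, $e(A,A)=0$ and no rows of type $AAA$. The inequalities of Section~\ref{sec:k=3} then become $5\ell\ge 3n+\max(|B_0|,|B_1|)/2$ and $7\ell\ge 4n+d_1$. For $(n,\ell)=(13,8)$ these force $d_1=4$ and $|B_0|=|B_1|=2$. Hence $d_2=|B_1|+e(B,B)\le 2+(1+2+1)=6$, whereas $24=\sum_x xd_x\ge 39-2d_1-d_2$ requires $d_2\ge 7$, a contradiction.

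The $8$-element separating set of Example~\ref{exa:1} exists only because global separation compares subsets of equal size (Lemma~\ref{lemma:sepsub}). In it, row $2$ is the sum of rows $9$ and $11$, and row $13$ is the sum of rows $3,4,11$; your condition forbids both.

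This is why the paper does not use blocks for this bound. For $m=13q$ with $q\ge 3$ it exhibits a single global hypergraph on $8q$ columns:
\begin{itemize}
\item $6q$ weight-$1$ rows, forming $A$;
\item $3q$ weight-$2$ rows: one edge inside $A$, $2q-1$ between $A$ and $B$, and $q$ inside $B$;
\item $4q$ weight-$3$ rows: $2q$ of type $AAA$ partitioning $A$, $2q-1$ of type $AAB$, and one of type $ABB$.
\end{itemize}
The cases $q\in\{1,2\}$ are checked by computer. The remainder is then added with Lemma~\ref{lemma:plusone} at a cost of $2$ triples per point. Your final arithmetic for $8q+2r\le 8m/13+24$ is fine.

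\paragraph{The lower bound is only a programme.} The mechanism you name is the right one: weight classes of rows, $3\ell=\sum_x xd_x$, and coincidences of row sums of equal-size subsets, lifted to $3$-subsets by Lemma~\ref{lemma:sepsub}. (For two points with the same support, any pair $P$ disjoint from them already works; no further condition on $P$ is needed.) But the proof consists precisely of the forbidden configurations you defer:
\begin{itemize}
\item $d_0\le 1$;
\item weight-$1$ rows have distinct supports, so $d_1\le\ell$;
\item $\Gamma$ is simple of maximum valency $3$, with at most one edge inside $A$;
\item each vertex of $B$ has at most one neighbour in $A$;
\item there is no path $b_1b_2b_3$ inside $B$ with $b_1,b_3\in B_1$;
\item rows of type $AAA$ have pairwise disjoint supports;
\item each $b\in B$ lies in at most one row of type $AAB$.
\end{itemize}
These give $19\ell\ge 12m-16-d_1$ and $21\ell\ge 12m-18+2d_1$. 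Adding twice the first to the second eliminates $d_1$ and yields $59\ell\ge 36m-50$. No discharging is needed, but without these specific constraints the constant $36/59$ does not emerge.
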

The lower bound in Theorem~\ref{thrm:main2} should not be taken too seriously. Its purpose is rather to show that the general lower bound in Theorem~\ref{thrm:main} is not optimal. Indeed, for $k=3$, the leading coefficient in Theorem~\ref{thrm:main} is $6/(3k+1)=3/5$, whereas here we obtain the larger value $36/59$.
We conjecture that
$\lim_{m\to \infty}\sigma(m,3)/m=8/13$.

\section{Proof of Theorem~\ref{l:3}}\label{sec:2}
Separating sets can be phrased in terms of partitions: this is classic and has been explored in all papers mentioned in the introduction; we only repeat the essential parts to characterize the groups attaining the bound in Theorem~\ref{thrm:main} (which is the only part we claim originality in this section).

The collection of all partitions of $\Omega$ is a partially ordered set.
Indeed, given two partitions $\pi$ and $\pi'$, we have $\pi\le \pi'$ if,
for every $P\in \pi$, there exists $P'\in \pi'$ with $P\subseteq P'$,
that is, $\pi$ is a \textit{\textbf{refinement}} of $\pi'$.
Actually, this partially ordered set is also a lattice.
Indeed, given two partitions $\pi,\pi'$ of $\Omega$, the meet
$\pi\wedge\pi'$ is the partition of $\Omega$ given by
\[
\{P\cap P'\mid P\in \pi,\ P'\in \pi' \text{ with } P\cap P'\ne\emptyset\}.
\]
The join $\pi\vee\pi'$ of $\pi$ and $\pi'$ can be defined analogously. The bottom element in this lattice is the partition into singletons $\{\{\omega\}\mid \omega\in \Omega\}$ and the top element is the universal partition $\{\Omega\}$.

A separating set $S$ for $G$ has a natural interpretation within the
lattice of partitions of $\Omega$. 
For each $\omega\in \Omega$, let $\pi_\omega$ be the partition of $\Omega$
given by the orbits of $G_\omega$. 

\begin{lemma}\label{l:1}
The set $S$ is separating for $G$ if and only if the meet of all the
partitions $\{\pi_\alpha\mid \alpha\in S\}$ is the partition of $\Omega$
into singletons.
\end{lemma}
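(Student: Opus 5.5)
The plan is to unfold the definition of the meet and compare it, element by element, with the definition of a separating set. Write $\pi_S=\bigwedge_{\alpha\in S}\pi_\alpha$. By iterating the description of the binary meet given above, the parts of $\pi_S$ are exactly the nonempty intersections $\bigcap_{\alpha\in S}P_\alpha$, where each $P_\alpha$ is a $G_\alpha$-orbit. The case $S=\emptyset$ is degenerate: the empty meet is the top element $\{\Omega\}$, and the empty set separates nothing. The statement then holds trivially, with the only exception being $|\Omega|=1$, where both sides are true. From this description, two elements $\omega,\omega'$ lie in the same part of $\pi_S$ if and only if, for every $\alpha\in S$, they lie in the same part of $\pi_\alpha$, that is, in the same $G_\alpha$-orbit. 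I will justify this by a short induction on $|S|$ using the binary meet formula, since $\omega\in P\cap P'$ and $\omega'\in P\cap P'$ exactly when both elements share a block in each partition.

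With this characterisation, both directions are direct. $\pi_S$ is the partition into singletons if and only if no two distinct $\omega,\omega'$ share a part of $\pi_S$. By the description above, this holds if and only if, for all distinct $\omega,\omega'$, there is some $\alpha\in S$ with $\omega,\omega'$ in different $G_\alpha$-orbits, i.e.\ some $\alpha\in S$ separating $\omega,\omega'$. That last condition is precisely the definition of $S$ being separating.

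There is no real obstacle. The only step needing any care is the identification of the parts of an iterated meet: the meet is associative, so the $|S|$-fold meet is well defined. Once that identification is recorded, the lemma is a restatement of the definitions.
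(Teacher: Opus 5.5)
Your proposal is correct and is essentially the paper's argument: both rest on the fact that two points lie in the same part of $\bigwedge_{\alpha\in S}\pi_\alpha$ exactly when they lie in the same $G_\alpha$-orbit for every $\alpha\in S$, which is the negation of some $\alpha\in S$ separating them. You just make explicit the description of the iterated meet, and the degenerate case $S=\emptyset$, which the paper leaves implicit.
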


\begin{proof}
Assume $S$ is separating.
If the meet of the partitions $\{\pi_\alpha\mid \alpha\in S\}$ is not
the partition of $\Omega$ into singletons, then there exist two distinct
elements $\omega,\omega'\in\Omega$ belonging to a part of $\pi_\alpha$
for every $\alpha\in S$.
However, this means that no element of $S$ separates $\omega$ and
$\omega'$.
The converse is also immediate.
\end{proof}
Here are some natural observations that one can make, pivoting on Lemma~\ref{l:1}, when $G$ is
transitive.
Let $r$ be the \textit{\textbf{rank}} of $G$, that is, the number of
orbits of $G_\alpha$ on $\Omega$. Analogously, $r$ is the number of orbits of $G$ in its action on the cartesian product $\Omega\times\Omega$. In particular, when $r=2$, $G$ acts transitively on the set of ordered pairs of distinct elements of $\Omega$, that is, $G$ is $2$-transitive.

\begin{lemma}\label{l:2}
Let $G$ be a transitive permutation group on $\Omega$, let $r$ be its
rank and let $S$ be an irredundant separating set. Then, $|S|\le |\Omega|-r+1$. In particular, $\sigma(G)\le |\Omega|-r+1$.
\end{lemma}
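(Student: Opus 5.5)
We argue via Lemma~\ref{l:1}, using the number of parts of partitions as a potential function. Write $S=\{\alpha_1,\ldots,\alpha_s\}$ in any order and set $\mu_0=\{\Omega\}$ and $\mu_i=\pi_{\alpha_1}\wedge\cdots\wedge\pi_{\alpha_i}$ for $1\le i\le s$. Then $\mu_0\ge\mu_1\ge\cdots\ge\mu_s$. By Lemma~\ref{l:1}, $\mu_s$ is the partition into singletons, so it has $|\Omega|$ parts. Since $G$ is transitive, each $\pi_\alpha$ has exactly $r$ parts, so $\mu_1=\pi_{\alpha_1}$ has $r$ parts.

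The key step is to show that each subsequent meet strictly refines the previous one: $\mu_{i}<\mu_{i-1}$ for $2\le i\le s$. Suppose instead that $\mu_i=\mu_{i-1}$ for some $i$. Then $\mu_{i-1}\le\pi_{\alpha_i}$. Hence the meet of $\{\pi_\alpha\mid \alpha\in S\setminus\{\alpha_i\}\}$ equals the meet over all of $S$, because adding $\pi_{\alpha_i}$ does not change $\mu_{i-1}$, and the remaining factors $\pi_{\alpha_{i+1}},\ldots,\pi_{\alpha_s}$ are met afterwards in the same way. More formally, the meet is commutative and associative, and $\mu_{i-1}\wedge\pi_{\alpha_i}=\mu_{i-1}$. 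So the meet over $S\setminus\{\alpha_i\}$ is the singleton partition, and by Lemma~\ref{l:1} the set $S\setminus\{\alpha_i\}$ is separating. This contradicts the irredundancy of $S$.

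A strict refinement increases the number of parts by at least one. Therefore
\[
|\Omega| = |\mu_s| \ge |\mu_1| + (s-1) = r+s-1,
\]
which gives $s\le|\Omega|-r+1$.

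For the final statement, note that $\Omega$ itself is separating, since $\alpha$ separates $\alpha$ from every other point. Removing redundant elements one at a time yields an irredundant separating set, so $\sigma(G)$ is at most the size of some irredundant separating set, which is at most $|\Omega|-r+1$.

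The only delicate point is the strictness step. It must use irredundancy with respect to the whole set $S$, which is why the meet is shown to be unchanged after discarding $\alpha_i$, rather than arguing only about the prefix $\alpha_1,\ldots,\alpha_i$. The case $s=1$ is trivial, since then $s=1\le|\Omega|-r+1$ because $r\le|\Omega|$.
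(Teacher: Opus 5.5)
Your proof is correct and takes essentially the same route as the paper's: the paper also uses irredundancy and Lemma~\ref{l:1} to show that each successive meet $\pi_{\alpha_1}\wedge\cdots\wedge\pi_{\alpha_\ell}$ gains at least one part, which yields $|\Omega|\ge r+s-1$. The only cosmetic difference is in the final claim: the paper notes that a separating set of minimum size is irredundant, while you prune $\Omega$ down to an irredundant separating set, and both arguments work.
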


\begin{proof}
Let $S=\{\alpha_1,\ldots,\alpha_s\}$ be an irredundant separating set of cardinality
$s$.
As $G$ has rank $r$, for each $\omega\in \Omega$, the partition $\pi_\omega$ has $r$ parts.
We prove that, for each $\ell\in \{1,\ldots,r\}$, the partition
$\pi_1\wedge\pi_2\wedge\cdots\wedge \pi_\ell$ has at least $r+\ell-1$ parts.

When $\ell=1$, this is clear from the definition of $r$.
Assume then $\ell>1$ and, arguing by contradiction, assume that
$\pi_1\wedge\pi_2\wedge\cdots\wedge \pi_\ell$ has at most $r+\ell-2$ parts.
Since by induction
$\pi_1\wedge\pi_2\wedge\cdots\wedge \pi_{\ell-1}$ has at least $r+\ell-2$ parts,
we deduce
$\pi_1\wedge\pi_2\wedge\cdots \wedge \pi_{\ell-1}\le \pi_\ell$
and hence
\[
\pi_1\wedge\pi_2\wedge\cdots \wedge \pi_{\ell-1}\wedge\pi_{\ell+1}\wedge\cdots\wedge\pi_s
=\bigwedge_{i=1}^s
\pi_i.
\]
As $S$ is separating, by Lemma~\ref{l:1}, the partition above is the partition into singletons.
Applying again Lemma~\ref{l:1}, we also deduce that
$S\setminus\{\alpha_\ell\}$ is separating, which contradicts the fact
that $S$ is irredundant.

From the paragraph above, the partition
$\pi_1\wedge\cdots\wedge\pi_s$ has at least $r+s-1$ parts and hence $|\Omega|\ge r+s-1$.

Finally, observe that a separating set of minimal cardinality is irredundant.
\end{proof}

Separating sets are also related to an important concept in permutation groups. A \textit{\textbf{base}} for a permutation group $G$ is a sequence $(\omega_1 , \ldots , \omega_\ell )$ whose pointwise stabilizer in $G$ is the identity. A base is
said to be \textit{\textbf{irredundant}} if no base point is fixed by the stabilizer of its predecessors. Therefore, this gives
rise to a decreasing sequence of stabilizers
$$
G > G_{\omega_1} > G_{\omega_1 ,\omega_2} > \cdots > 
G_{\omega_1 ,\omega_2 ,\ldots,\omega_{\ell-1}} > 
G_{\omega_1,\omega_2,\ldots,\omega_\ell}= 1.
$$

The minimal cardinality of a base, called the \textit{\textbf{base size}} of $G$, is usually denoted by $b(G)$ and has played a key role in
the investigation of primitive groups.
\begin{lemma}\label{l:0}
If $S$ is separating for $G$, then $S$ is a base. In particular, $b(G)\le \sigma(G)$.
\end{lemma}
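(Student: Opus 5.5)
To prove Lemma~\ref{l:0}, I will show directly that the pointwise stabilizer $H=\bigcap_{\alpha\in S}G_\alpha$ of a separating set $S$ is trivial. Listing the elements of $S$ in any order then gives a base, and taking $S$ of size $\sigma(G)$ yields $b(G)\le|S|=\sigma(G)$.

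Suppose $h\in H$ and $\omega\in\Omega$, and set $\omega'=\omega^h$. Fix any $\alpha\in S$. Since $h$ fixes $\alpha$, we have $h\in G_\alpha$, so $\omega$ and $\omega'$ lie in the same $G_\alpha$-orbit. In other words, no $\alpha\in S$ separates $\omega$ and $\omega'$. Because $S$ is separating, this forces $\omega=\omega'$, that is, $\omega^h=\omega$. As $\omega$ was arbitrary, $h$ fixes every point of $\Omega$ and so $h=1$.

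We could phrase the same argument through Lemma~\ref{l:1}: each $H$-orbit is contained in a part of $\pi_\alpha$ for every $\alpha\in S$, hence in a part of $\bigwedge_{\alpha\in S}\pi_\alpha$, which is the partition into singletons. No step is a real obstacle. The only care needed is to note that a ``base'' in the paper's sense only requires trivial pointwise stabilizer, not irredundancy, so any ordering of $S$ works.
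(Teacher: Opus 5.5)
Your proof is correct and is essentially the paper's argument. The paper argues by contradiction: it takes a non-identity $g$ fixing $S$ pointwise and a point $\omega$ with $\omega^g\ne\omega$, and observes that no element of $S$ separates $\omega$ and $\omega^g$, which is your direct argument read contrapositively.
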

\begin{proof}
Let $S$ be separating.
If $S$ is not a base, then there exists a non-identity element $g\in G$ fixing $S$ pointwise. In particular, there exists $\omega\in \Omega$ with $\omega'=\omega^g\ne \omega$. Clearly, no element of $S$ separates $\omega,\omega'$.
\end{proof}

Cameron and Fon-der-Flaass~\cite{CF} (see also~\cite[Section~4.14]{Peter}) have proved that in a finite permutation group the following conditions are equivalent:
\begin{itemize}
\item all irredundant bases have the same size;
\item the irredundant bases are invariant under re-ordering;
\item the irredundant bases are the bases of a matroid.
\end{itemize}
A permutation group satisfying one, and hence all, of these conditions is said to be \textit{\textbf{IBIS}}, that is, \textit{\textbf{Irredundant Bases of Invariant Size}}. In particular, in an IBIS group $G$, all irredundant bases have cardinality $b(G)$ and this value is sometimes referred to as the \textit{\textbf{rank}} of the IBIS group, since it is the rank of the corresponding matroid. Motivated by a question of Cameron~\cite[Section~4.14]{Peter}, some effort has been put into classifying the primitive IBIS groups. Currently, the classification is reduced to almost simple groups (of rank $3,4,5,6$) and to affine groups. Here we would like to propose an analogue problem.
\begin{question}\label{eq:1}
{\rm What are the primitive groups in which all irredundant separating sets have the same cardinality? }
\end{question}

After this short digression, we give some consequences of Lemma~\ref{l:2}.
\begin{corollary}\label{cor}
Let $G$ be a transitive group of degree $n\ge 2$.
Then $\sigma(G)=n-1$ if and only if $G$ is $2$-transitive. In particular, in a $2$-transitive group all irredundant separating sets have the same cardinality.
\end{corollary}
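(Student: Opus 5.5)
The plan is to prove the two directions separately, using Lemma~\ref{l:2} for one inequality and a direct argument on orbits of point stabilizers for everything else.

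\emph{$2$-transitive implies $\sigma(G)=n-1$, and every irredundant separating set has size $n-1$.} Suppose $G$ is $2$-transitive, so $r=2$. Lemma~\ref{l:2} then says that every irredundant separating set $S$ satisfies $|S|\le n-1$. For the reverse inequality, note that $G_\alpha$ has exactly the two orbits $\{\alpha\}$ and $\Omega\setminus\{\alpha\}$. Hence $\alpha$ separates $\omega,\omega'$ if and only if $\alpha\in\{\omega,\omega'\}$. Now let $S$ be any separating set and suppose two distinct points $\omega,\omega'$ both lie outside $S$. Then no element of $S$ separates them, which is a contradiction. So $|\Omega\setminus S|\le 1$, that is, $|S|\ge n-1$. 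It follows that every separating set, and in particular every irredundant one, has size at least $n-1$. Combined with Lemma~\ref{l:2}, every irredundant separating set has size exactly $n-1$, and so $\sigma(G)=n-1$. This also gives the ``in particular'' clause of the statement.

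\emph{$\sigma(G)=n-1$ implies $2$-transitive.} For $n\ge 2$ a transitive group has rank $r\ge 2$. Lemma~\ref{l:2} gives $\sigma(G)\le n-r+1$, so $\sigma(G)=n-1$ forces $n-1\le n-r+1$, i.e.\ $r\le 2$. Therefore $r=2$, and as noted before Lemma~\ref{l:2}, rank $2$ means $G$ is $2$-transitive.

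No step here is a real obstacle. The only points to handle with care are the edge case $n=2$, where the rank is still $2$ because $G_\alpha$ fixes $\alpha$ and the other point forms its own orbit, and the fact that $r\ge 2$ whenever $n\ge 2$, since $\{\alpha\}$ is always an orbit of $G_\alpha$.
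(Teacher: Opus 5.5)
Your proof is correct and follows the paper's argument. The converse direction is exactly the paper's use of Lemma~\ref{l:2} to force $r\le 2$, and your observation that a separating set can omit at most one point is the pointwise version of the paper's remark that one needs the meet of $n-1$ partitions $\{\{\omega\},\Omega\setminus\{\omega\}\}$ to reach the partition into singletons (Lemma~\ref{l:1}). You also spell out the ``in particular'' clause, which the paper leaves implicit: the lower bound combined with the bound $|S|\le n-r+1=n-1$ for irredundant sets from Lemma~\ref{l:2} gives every irredundant separating set size exactly $n-1$.
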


\begin{proof}Let $r$ be the rank of $G$.
If $\sigma(G)=n-1$, then Lemma~\ref{l:2} implies $n-1\le n-r+1$ and hence
$r\le 2$.
As $n\ge 2$, we have $r=2$ and hence $G$ is $2$-transitive.

If $G$ is $2$-transitive, then for every $\omega\in \Omega$, $\pi_\omega$
is the partition $\{\{\omega\},\Omega\setminus\{\omega\}\}$.
It is clear that we need the meet of $n-1$ such partitions to reach the
partition into singletons in the partition lattice, and hence the result follows from Lemma~\ref{l:1}.
\end{proof}

With some work, we can strengthen the previous corollary as in the statement of Theorem~\ref{l:3}.
\begin{proof}[Proof of Theorem~$\ref{l:3}$]
From Lemma~\ref{l:2}, we have $\sigma(G)\le n-r+1$ and hence we assume that $\sigma(G) = n - r + 1$.

Let $\Omega$ be the domain of $G$. 
By Lemmas~\ref{l:1} and~\ref{l:2}, for any two elements $\omega,\omega' \in \Omega$ with
$G_\omega \neq G_{\omega'}$, the meet $\pi_\omega \wedge \pi_{\omega'}$ consists of exactly
$r+1$ parts. In other words, when we intersect the orbits of $G_\omega$ and $G_{\omega'}$,
only one orbit can be split while all others coincide; otherwise, we would have
$\sigma(G) < n - r + 1$.  
We refer to this property as $(\dagger)$.

If $r = 2$, that is, if $G$ is $2$-transitive, then by
Corollary~\ref{cor} the conclusion follows by taking $\Sigma = \{\Omega\}$.
Hence, for the remainder of the proof we assume $3 \le r \le n - 1$,
so that $G$ is neither regular nor $2$-transitive.

Fix $\omega \in \Omega$. For each $i \in \{1,\ldots,n\}$, let $m_i$ denote the number of
$G_\omega$-orbits of cardinality $i$, and label them
\[
O_{i,1}, \ldots, O_{i,m_i},
\]
where $O_{1,1} = \{\omega\}$. Clearly, $n = \sum_{i=1}^n i m_i$.

\smallskip
We claim that $G_\omega$ fixes only the point $\omega$, that is, $m_1 = 1$.
Suppose, for a contradiction, that $G_\omega$ fixes more than one point.
Let $\alpha \in \Omega$ lie in a $G_\omega$-orbit $O$ of maximal cardinality.
Since $r < n$, we have $|O| \ge 2$.
By hypothesis, $G_\alpha$ fixes a point $\alpha' \neq \alpha$.
As $G_\alpha = G_{\alpha'}$ and $G_\omega \neq G_\alpha$, it follows that
$\alpha'$ lies in a $G_\omega$-orbit of size greater than $1$.
Thus the meet $\pi_\omega \wedge \pi_\alpha$ has at least $r+2$ parts, since the part of $\pi_\omega$ containing $\alpha$ and the part of $\pi_\alpha$ containing $\omega$ are both split into at least two parts,
contradicting $(\dagger)$.  
Therefore, $G_\omega$ fixes only $\omega$. In particular, $m_1 = 1$.

\smallskip

Let $i \ge 2$ with $m_i \neq 0$ and let $\omega' \in O_{i,\ell}$.
Property $(\dagger)$ implies
\[
\pi_\omega \wedge \pi_{\omega'}
=\left\{
O_{j,\kappa}\mid (j,\kappa) \neq (i,\ell)\right\}
\;\cup\;
\{\{\omega'\},
O_{i,\ell} \setminus \{\omega'\}\}.
\]
Relative to $\pi_\omega$, this increases the number of parts of size $1$ and $i-1$ by $1$,
and decreases the number of parts of size $i$ by $1$.
Since $\pi_{\omega'}$ has the same shape as $\pi_\omega$ and $\pi_\omega$ has only one
singleton part, one of the following must occur:

\begin{enumerate}
\item\label{casE1}
The parts of $\pi_{\omega'}$ are $\{\omega'\}$, $(O_{i,\ell} \setminus \{\omega'\}) \cup \{\omega\}$,
and the remaining $r-2$ parts coincide with those of $\pi_\omega$.
\item\label{casE2}
There exists $\ell' \in \{1,\ldots,m_{i-1}\}$ such that the parts of $\pi_{\omega'}$ are
$\{\omega'\}$, $O_{i-1,\ell'} \cup \{\omega\}$, $O_{i,\ell} \setminus \{\omega'\}$,
and the remaining $r-3$ parts coincide with those of $\pi_\omega$.
\end{enumerate}
Roughly speaking, in~\eqref{casE1}, the new part of size $i$ of $\pi_{\omega'}$ is $O_{i,\ell}$ with $\omega'$ replaced by $\omega$. In~\eqref{casE2}, the new part of size $i$ of $\pi_{\omega'}$ is obtained by joining $\omega$ to a part of $\pi_\omega$  of size $i-1$.

We say that $O_{i,\ell}$ is of type~1 if~\eqref{casE1} holds, and of type~2
if~\eqref{casE2} holds.  
Note that type~2 cannot occur when $i = 2$, since $m_1 = 1$.

Let $i_0 \ge 2$ be minimal with $m_{i_0} \neq 0$.  
The orbits of cardinality $i_0$ are all of type $1$, because a necessary condition for an orbit of cardinality $i$ to be of type~2 is that $m_{i-1}\ne 0$.

\smallskip

If $O_{i,\ell}$ is of type~1 and $\omega' \in O_{i,\ell}$, then
\[
\Delta = O_{i,\ell} \cup \{\omega\}
\]
is invariant under both $G_\omega$ and $G_{\omega'}$.
Thus $N = \langle G_\omega, G_{\omega'} \rangle$ acts transitively on $\Delta$,
and hence $\Delta$ is a block of imprimitivity for the action of $G$ on $\Omega$.
Moreover, $N_\omega = G_\omega$ has orbits $\{\omega\}$ and $\Delta \setminus \{\omega\}$,
so $N$ acts $2$-transitively on $\Delta$.

\smallskip

We claim that there is a unique $G_\omega$-orbit of type~$1$.  
Suppose, for a contradiction, that $O_{i_0,1}$ and $O_{i,\ell}$ are both of type~$1$, where $i\ge i_0$.
Choose $\omega'\in O_{i_0,1}$ and $\omega''\in O_{i,\ell}$, and set
\[
\Lambda=\{\omega\}\cup O_{i_0,1}\cup O_{i,\ell}.
\]
As in the discussion preceding this claim, the fact that $O_{i_0,1}$ is of type~$1$ implies
that $\Delta_0=\{\omega\}\cup O_{i_0,1}$ is invariant under both $G_\omega$ and $G_{\omega'}$,
and hence $\Delta_0$ is a block of imprimitivity for the action of
$\langle G_\omega,G_{\omega'}\rangle$ on $\Omega$. In particular, $\Delta_0$ is also a block
for any subgroup containing $\langle G_\omega,G_{\omega'}\rangle$.

Similarly, since $O_{i,\ell}$ is of type~$1$, the set 
$\Delta_1=\{\omega\}\cup O_{i,\ell}$
is invariant under both $G_\omega$ and $G_{\omega''}$, and hence it is a block of imprimitivity
for $\langle G_\omega,G_{\omega''}\rangle$; therefore $\Delta_1$ is also a block for any subgroup
containing $\langle G_\omega,G_{\omega''}\rangle$.

Now let
\[
M=\langle G_\omega,G_{\omega'},G_{\omega''}\rangle.
\]
By the type~$1$ condition (applied twice) and the transitivity argument already used above,
the group $M$ acts transitively on $\Lambda$. 
Since $\Delta_0$ is a block for $M$ and $\Delta_0\subseteq \Lambda$, the set $\Lambda$ is a union of
$M$-translates of $\Delta_0$.
Thus $1+i_0$ divides $|\Lambda|=1+i_0+i$.
Likewise, $|\Delta_1|=1+i$ divides $1+i_0+i$.
But these divisibilities force $1+i_0 \mid i$ and $1+i \mid i_0$, which is impossible for positive
integers $i_0,i$ with $i\ge i_0\ge 2$. This contradiction shows that there is exactly one orbit of
type~$1$.

In particular, since  all $G_\omega$-orbits of size
$i_0$ are of type~$1$, we have $m_{i_0}=1$, and every $G_\omega$-orbit of size
strictly larger than $i_0$ must be of type~$2$. Finally, note that such larger orbits do exist,
since $G$ is not $2$-transitive.

\smallskip

If $i > i_0$ and $m_{i-1} = 0$, then $m_i = 0$.
Otherwise, an orbit of size $i$ would have to be of type~2, requiring $m_{i-1} \neq 0$.
This implies $m_{i_0+1}\neq 0$, because otherwise $G_\omega$ would have only two orbits, turning $G$ into a $2$-transitive group, which is a contradiction. 
We now show that $m_{i_0+2}=0$.  
Suppose, for a contradiction, that $m_{i_0+2}\neq 0$.  
Choose
$\omega'\in O_{i_0,1}$, 
$\omega''\in O_{i_0+1,1}$, 
$\omega'''\in O_{i_0+2,1}$,
and set
\[
\Lambda=\{\omega\}\cup O_{i_0,1}\cup O_{i_0+1,1}\cup O_{i_0+2,1}.
\]
Let
\[
M=\langle G_\omega,G_{\omega'},G_{\omega''},G_{\omega'''}\rangle.
\]

By the description of the orbits of $G_{\omega'}$ and $G_{\omega''}$ given in~\eqref{casE1} and~\eqref{casE2}, the group $M$ acts transitively on $\Lambda$.  
On the other hand, since $O_{i_0,1}$ is of type~$1$, the subgroup $\langle G_\omega,G_{\omega'}\rangle$ acts transitively on
\[
\Delta=\{\omega\}\cup O_{i_0,1},
\]
and hence $\Delta$ is a block of imprimitivity for the action of $\langle G_\omega,G_{\omega'}\rangle$ on $\Omega$.  
As $\langle G_\omega,G_{\omega'}\rangle\le M$, the set $\Delta$ is also a block of imprimitivity for the action of $M$ on $\Lambda$.
Therefore,
$|\Delta|=1+i_0$  divides
$|\Lambda|=1+i_0+(i_0+1)+(i_0+2)=3i_0+4$, which is a contradiction.  
Hence $m_{i_0+2}=0$, and consequently $m_i=0$ for every $i\ge i_0+2$.

Summarizing, the stabilizer $G_\omega$ has exactly one orbit of size $1$, one orbit of size $i_0$, and all remaining orbits of size $i_0+1$.  
In particular, 
$m_{i_0+1}=n/(i_0+1)-1$.

Let
$\Delta_0 = \{\omega\} \cup O_{i_0,1}$,
$\Delta_j = O_{i_0+1,j}$,
for $1 \le j \le \frac{n}{i_0+1} - 1$.
Set
\[
\Sigma = \{\Delta_0, \Delta_1, \ldots, \Delta_{n/(i_0+1)-1}\},
\quad
N = \langle G_\omega, G_{\omega'} \rangle,\]
where $\omega'\in O_{i_0,1}$.
It follows from~\eqref{casE2} that, if $\omega''$ lies in a $G_\omega$-orbit of type~$2$, then
\[
G_{\omega''}\le \langle G_\omega,G_{\omega'}\rangle = N.
\]
Since all $G_\omega$-orbits of size greater than $i_0$ are of type~$2$, we conclude
$G_\alpha \le N$, for every  $\alpha\in \Omega$.
Thus $N$ contains the stabilizer of every element of $\Omega$, and hence $N$ is a normal subgroup of $G$.  

The $N$-orbits on $\Omega$ are precisely the sets $\Delta_j$ and hence $\Sigma$ is a non-trivial system of imprimitivity for the action of $G$ on $\Omega$, and $N$ is the kernel of the action of $G$ on $\Sigma$.

Since $G$ acts transitively on $\Sigma$ and $N$ is also the setwise stabilizer of the block $\Delta_0$, it follows that $G$ acts regularly on $\Sigma$.  
Moreover, as shown above, the setwise stabilizer of a block acts $2$-transitively on that block.
\end{proof}
\section{Metric dimension of Johnson graphs}\label{section:subsets}
Let $m$ and $k$ be natural numbers with $1\le k\le m-1$, and let $[m]=\{1,\ldots,m\}$.
We consider $G=\mathrm{Sym}(m)$, the symmetric group of degree $m$, in its
natural action on ${[m]\choose k}$, the collection of all $k$-subsets of $[m]$.
Observe that the cardinality of ${[m]\choose k}$ equals ${m\choose k}$.

We investigate the orbit dimension of $G$ in this action, and we denote it by $\sigma(m,k)$ for simplicity.
\subsection{Preliminaries}\label{sec:preliminaries}

Given $\omega\in {[m]\choose k}$, the stabilizer $G_\omega$ has $k+1$ orbits on ${[m]\choose k}$. Indeed, for each $i\in \{0,\ldots,k\}$, the collection of all elements $\omega'\in {[m]\choose k}$ with $|\omega\cap\omega'|=i$ forms a $G_\omega$-orbit. Since this is also the collection of all vertices in the Johnson graph $J(m,k)$ at distance $i$ from $\omega$, we see that $\sigma(m,k)$ coincides with the metric dimension of $J(m,k)$. In graph-theoretic terminology, separating sets are called \textit{\textbf{resolving sets}}.

The case $k=1$ is straightforward. As $G$ is $2$-transitive on ${[m]\choose 1}=[m]$, from Corollary~\ref{cor}, we deduce
$\sigma(m,1)=m-1$.
The case $k=2$ is already more interesting. It was proved by Bailey and Cameron~\cite{bayley,cameron} that
\begin{equation*}
\sigma(m,2)=
\begin{cases}
\lceil 2(m-1)/3\rceil,&\textrm{if }m=5 \text{ or }3 \mid m,\\
\lceil 2(m-1)/3\rceil+1,&\textrm{otherwise.}
\end{cases}
\end{equation*}
Therefore, by~\cite{halasi}, when $k\in \{1,2\}$, $\sigma(m,k)$ either equals the base size of $\mathrm{Sym}(m)$ on ${[m]\choose k}$ or differs from it by at most $1$. We will see that for other values of $k$ this is no longer true.

We say that $S$ is a separating set for ${[m]\choose k}$ to mean that $S\subseteq {[m]\choose k}$ is a separating set for the action of $\mathrm{Sym}(m)$ on ${[m]\choose k}$, or equivalently that $S$ is a resolving set for $J(m,k)$.

We start with some observations.

\begin{lemma}\label{lemma:sepsub}
Let $S$ be a separating set for ${[m]\choose k}$ with $m\ge 2k-1$. Then, for any $i\in\mathbb{N}$ with $i\le k$ and for any two distinct subsets $\omega_1,\omega_2$ of $[m]$ with $|\omega_1|=|\omega_2|=i$, there exists $\alpha\in S$ with $|\omega_1\cap \alpha|\ne |\omega_2\cap \alpha|$.
\end{lemma}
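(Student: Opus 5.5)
Since $S$ resolves only $k$-subsets, the plan is to lift the two given $i$-subsets to $k$-subsets by adding the same padding set. Write $\omega_1=C\cup A_1$ and $\omega_2=C\cup A_2$, where $C=\omega_1\cap\omega_2$ and $A_1,A_2$ are disjoint, nonempty and of equal size $t=i-|C|\ge 1$. Choose a set $P\subseteq[m]\setminus(\omega_1\cup\omega_2)$ with $|P|=k-i$, and put $\tilde\omega_j=\omega_j\cup P$ for $j=1,2$. These are distinct $k$-subsets. Since $S$ is separating, there is $\alpha\in S$ with $|\tilde\omega_1\cap\alpha|\ne|\tilde\omega_2\cap\alpha|$. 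Because $P$ is disjoint from both $\omega_1$ and $\omega_2$, we have $|\tilde\omega_j\cap\alpha|=|\omega_j\cap\alpha|+|P\cap\alpha|$. Subtracting the common term $|P\cap\alpha|$ then gives $|\omega_1\cap\alpha|\ne|\omega_2\cap\alpha|$, as required.

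The only point needing care is that $P$ exists, i.e.\ that $m-|\omega_1\cup\omega_2|\ge k-i$. We have $|\omega_1\cup\omega_2|=i+t\le 2i$, so it suffices that $m\ge k+t$. When $i\le k-1$ this holds, since $t\le i\le k-1$ and $m\ge 2k-1$. When $i=k$ we need $|P|=0$, so $P=\emptyset$ works trivially: then $\tilde\omega_j=\omega_j$ and the claim is just the separating property. Thus the hypothesis $m\ge 2k-1$ is exactly what guarantees room for the padding in the worst case $i=k-1$, $t=k-1$.

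I expect no real obstacle. The one step to verify carefully is this counting bound in the extreme case $i=k-1$ with $\omega_1,\omega_2$ disjoint: there $|\omega_1\cup\omega_2|=2k-2$, and we need one further point, which $m\ge 2k-1$ supplies. The case $i=0$ is vacuous, since two distinct $0$-subsets do not exist.
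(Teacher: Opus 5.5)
Your proposal is correct and is essentially the paper's proof: pad both $i$-subsets with a common $(k-i)$-set $\gamma$ disjoint from $\omega_1\cup\omega_2$, apply the separating property to the resulting $k$-subsets, and cancel the common contribution $|\gamma\cap\alpha|$. Your counting ($m\ge k+t$, where $t=|\omega_1\setminus\omega_2|$) is a slightly sharper form of the paper's bound $m-2i\ge k-i$, and your separate treatment of the case $i=k$ (with $P=\emptyset$) matches the paper's.
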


Roughly speaking, Lemma~\ref{lemma:sepsub} says that if $S$ is separating for the $k$-subsets and $m\ge 2k-1$, then $S$ also separates the $i$-subsets for every $i\le k$.

\begin{proof}
Let $i\in\mathbb{N}$ with $i\le k$ and let $\omega_1,\omega_2$ be two distinct $i$-subsets of $[m]$. If $i=k$, then there is nothing to prove, since $S$ is separating for the $k$-subsets of $[m]$. Assume $i<k$. Since
$|[m]\setminus (\omega_1\cup \omega_2)|\ge m-2i$
and $m\ge 2k-1$, we have $m-2i\ge k-i$. Hence there exists a $(k-i)$-subset $\gamma$ of $[m]$ disjoint from $\omega_1\cup \omega_2$. Let $\omega_1'=\omega_1\cup \gamma$ and $\omega_2'=\omega_2\cup \gamma$. Then $\omega_1',\omega_2'$ are two distinct $k$-subsets of $[m]$, and hence there exists $\alpha\in S$ with $|\omega_1'\cap \alpha|\ne |\omega_2'\cap \alpha|$. Since
\[
\omega_1'\cap \alpha=(\omega_1\cap \alpha)\cup (\gamma\cap \alpha), \quad
\omega_2'\cap \alpha=(\omega_2\cap \alpha)\cup (\gamma\cap \alpha),
\]
it follows that $|\omega_1\cap \alpha|\ne |\omega_2\cap \alpha|$.
\end{proof}

\begin{lemma}\label{lemma:plusone}
When $m\ge 2k-1$ and $k\ge 2$, we have $\sigma(m+1,k)\le\sigma(m,k)+\lfloor k/2\rfloor+1$.
\end{lemma}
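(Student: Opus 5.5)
The plan is to start from a separating set $S$ for ${[m]\choose k}$ with $|S|=\sigma(m,k)$, regard it as a family of $k$-subsets of $[m+1]$ avoiding the new point $m+1$, and add $t+1$ new $k$-subsets $\beta_0,\ldots,\beta_t$ with $t=\lfloor k/2\rfloor$, each containing $m+1$. We then check that $S'=S\cup\{\beta_0,\ldots,\beta_t\}$ is separating for ${[m+1]\choose k}$. We write $\beta_j=\{m+1\}\cup\gamma_j$ with $\gamma_j\in{[m]\choose k-1}$. The key observation is that for $\alpha\in S$ and any $\omega\in{[m+1]\choose k}$ we have $|\omega\cap\alpha|=|(\omega\setminus\{m+1\})\cap\alpha|$.

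Take two distinct $k$-subsets $\omega_1,\omega_2$ of $[m+1]$ and split into three cases.
\begin{enumerate}
\item If neither contains $m+1$, some $\alpha\in S$ separates them, since $S$ is separating.
\item If both contain $m+1$, then $\omega_1\setminus\{m+1\}$ and $\omega_2\setminus\{m+1\}$ are distinct $(k-1)$-subsets of $[m]$. Because $m\ge 2k-1$, Lemma~\ref{lemma:sepsub} gives $\alpha\in S$ with different intersection sizes, and by the observation above $\alpha$ separates $\omega_1,\omega_2$.
\item The remaining case is $\omega_1=\{m+1\}\cup A$ with $A\in{[m]\choose k-1}$ and $\omega_2=B\in{[m]\choose k}$. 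Here $S$ alone may fail, so the $\beta_j$ must do the work. Now $\beta_j$ fails to separate them exactly when $|B\cap\gamma_j|=|A\cap\gamma_j|+1$.
\end{enumerate}

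So the whole proof reduces to choosing $\gamma_0,\ldots,\gamma_t$ so that the following cannot happen simultaneously: $|B\cap\alpha|=|A\cap\alpha|$ for all $\alpha\in S$, and $|B\cap\gamma_j|=|A\cap\gamma_j|+1$ for all $j$. My first attempt would take the $\gamma_j$ pairwise disjoint, which is possible since $m\ge 2k-1$ and, when $t+1$ disjoint $(k-1)$-sets do not fit, we would pass to a modified choice. Then failure on every $\beta_j$ forces $B$ to have at least $t+1=\lfloor k/2\rfloor+1$ more points than $A$ inside $U=\bigcup_j\gamma_j$. Since $|B|=|A|+1$, this means $B\setminus U$ has at least $\lfloor k/2\rfloor$ fewer points than $A\setminus U$. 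Thus $B$ and $A$ differ substantially, and we want to exploit the $S$-constraints to derive a contradiction.

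Here I would try to use Lemma~\ref{lemma:sepsub}-style padding. Choose sets $A_0\subseteq A$ and $B_0\subseteq B$ of equal size, obtained by deleting points of $U$ from $B$ and points outside $U$ from $A$, and then try to show that $S$ cannot give them the same profile. Alternatively, I would choose the $\gamma_j$ not disjoint but as a chain of $(k-1)$-sets that successively exchange pairs of points. The factor $\lfloor k/2\rfloor$ suggests that each $\gamma_j$ should handle two ``units'' of discrepancy between $A$ and $B$, with a parity argument closing the count.

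The main obstacle is this third case, namely pinning down the precise configuration of the $\gamma_j$ for which the equalities on $S$ and the failures on all $\beta_j$ are incompatible. I would first test small $k$ (for $k=2$ we need $2$ extra sets, for $k=3$ also $2$) to guess the right configuration, and only then write the general counting argument.
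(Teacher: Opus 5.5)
\textbf{There is a genuine gap: the proposal never chooses the $\gamma_j$, and the whole content of the lemma lies in that choice.}

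Your reduction is correct. Cases (1) and (2) are handled properly, case (2) via Lemma~\ref{lemma:sepsub} with $i=k-1$. In case (3) you rightly observe that $\beta_j=\{m+1\}\cup\gamma_j$ fails to separate $\{m+1\}\cup A$ and $B$ exactly when $|B\cap\gamma_j|=|A\cap\gamma_j|+1$.

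The routes you sketch do not close the gap:
\begin{itemize}
\item Pairwise disjoint $\gamma_j$ need not exist when $m\ge 2k-1$. For $k=5$ and $m=9$ you would need three disjoint $4$-subsets of a $9$-set.
\item More seriously, $\gamma_j$ chosen independently of $S$ only constrain $A$ and $B$ relative to sets about which the equalities $|A\cap\alpha|=|B\cap\alpha|$ ($\alpha\in S$) say nothing. You give no mechanism for combining the two kinds of information.
\item The padding idea fails. Deleting points from $A$ and from $B$ does not preserve those equalities, so you cannot invoke Lemma~\ref{lemma:sepsub} on the resulting equal-size sets.
\item The chain/parity idea is only a guess.
\end{itemize}

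The missing idea is to anchor the new sets to a single member of $S$. Fix $\alpha=\{d_1,\ldots,d_k\}\in S$ and put $\gamma_j=\alpha\setminus\{d_j\}$ for $j=1,\ldots,\lfloor k/2\rfloor+1$. Suppose nothing in $S\cup\{\beta_j\}$ separates the two sets. Since $\alpha\in S$ and $m+1\notin\alpha$, we get $|A\cap\alpha|=|B\cap\alpha|=:x$. Set $a_j=1$ if $d_j\in A$ (else $0$), and $b_j=1$ if $d_j\in B$ (else $0$). Then failure of $\beta_j$ reads $x-b_j=x-a_j+1$, which forces $d_j\in A$ and $d_j\notin B$ for every $j$. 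Hence $x=|A\cap\alpha|\ge\lfloor k/2\rfloor+1$ and $x=|B\cap\alpha|\le k-\lfloor k/2\rfloor-1$. This is impossible because $2(\lfloor k/2\rfloor+1)>k$.

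So the constant $\lfloor k/2\rfloor+1$ comes from this pigeonhole inside one block $\alpha$, not from an exchange or parity argument, and this is precisely the paper's proof.
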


\begin{proof}

Let $S \subseteq \binom{[m]}{k}$ be a separating family with $|S| = \sigma(m,k)$ and let $\ell=\lfloor k/2\rfloor+1$.
Fix a $k$-subset $\alpha = \{d_1,\dots,d_k\} \in S$. For each $i\in\{1,\ldots,k\}$, define
\[
\beta_i = \{m+1\} \cup (\alpha \setminus \{d_i\}) \in \binom{[m+1]}{k}.
\]
Set $S' = S \cup \{\beta_1,\dots,\beta_\ell\}$.
Then $|S'| = \sigma(m,k) + \ell$. We show that $S'$ is separating for the $k$-subsets of $[m+1]$.

Let $\omega_1,\omega_2 \in \binom{[m+1]}{k}$ be distinct.
If neither contains $m+1$, then both lie in $[m]$ and are separated by $S$. If both contain $m+1$, then $S$ separates $\omega_1\setminus\{m+1\}$ and $\omega_2\setminus\{m+1\}$ by Lemma~\ref{lemma:sepsub}. Hence $S$ separates $\omega_1,\omega_2$.
It remains to consider the case that only one of $\omega_1,\omega_2$ contains $m+1$. Without loss of generality, we may suppose that $m+1\in\omega_1$. Let
$\omega_1 = A \cup \{m+1\}$
with $A \in \binom{[m]}{k-1}$. Assume, arguing by contradiction, that no element in $S$ separates $\omega_1$ and $\omega_2$, and that none of the sets $\beta_1,\dots,\beta_\ell$ separates them. Then $|\omega_1\cap \alpha|=|\omega_2\cap \alpha|$ and,  for each $i\in \{1,\ldots,\ell\}$, 
$|\beta_i \cap (A \cup \{m+1\})|
= |\beta_i \cap \omega_2|$.
As $m+1\in \omega_1$ and $m+1\notin \alpha$, we deduce $|A \cap \alpha|=|\omega_2\cap \alpha|$. Let $x=|A\cap \alpha|=|\omega_2\cap\alpha|$. Define
\[
a_i=
\begin{cases}1&\text{if }d_i\in A,\\
0&\text{otherwise,}
\end{cases}
\qquad b_i=
\begin{cases}1&\text{if }d_i\in \omega_2,\\
0&\text{otherwise.}
\end{cases}
\]
Then
$|\beta_i \cap (A \cup \{m+1\})|
= 1 + x - a_i$ and 
$|\beta_i \cap \omega_2|
= x - b_i$,
so
$1 + x - a_i = x - b_i$,
that is,
$1=a_i-b_i$. This implies $a_i=1$ (that is, $d_i\in A\subseteq\omega_1$) and $b_i=0$ (that is, $d_i\notin\omega_2$). Since this argument holds for each $i\in \{1,\ldots,\ell\}$, we deduce $x=|A\cap \alpha|\ge \ell$ and $x=|\omega_2\cap \alpha|\le k-\ell$. Therefore, $\ell\le k-\ell$,
which is impossible for our choice of $\ell$. Therefore some $\beta_i$ (with $i\in \{1,\ldots,\ell\}$) separates $\omega_1$ and $\omega_2$.
\end{proof}
Let $S$ be a collection of $k$-subsets of $[m]$. We say that $S$ is a \textbf{\textit{robust separating set}} for ${[m]\choose k}$ if
\begin{itemize}
\item for any two distinct $k$-subsets $\omega,\omega'$ of $[m]$, there exists $\alpha\in S$ with $|\omega\cap \alpha|\ne |\omega'\cap \alpha|$, that is, $S$ is separating for the action of $\mathrm{Sym}(m)$ on ${[m]\choose k}$, and
\item for any two distinct subsets $\omega,\omega'$ of $[m]$ with $|\omega|,|\omega'|<k$, there exists $\alpha\in S$ with $|\omega\cap \alpha|\ne |\omega'\cap \alpha|$.
\end{itemize}
We let $\sigma^\ast(m,k)$ be the minimal cardinality of a robust separating set for ${[m]\choose k}$. In particular, $\sigma^\ast(m,k)\ge \sigma(m,k)$, and in general the inequality can be strict. In Table~\ref{table:0}, we report the values of $\sigma(m,3)$ and $\sigma^\ast(m,3)$ for small values of $m$.

\begin{table}[!ht]
\begin{tabular}{r|cccccccccccc}\hline
$m$              &4&5&6&7&8&9&10&11&12&13\\\hline
$\sigma(m,3)$&3&3&4&5&5&6&7  & 7&8&8\\\hline
$\sigma^\ast(m,3)$&3&4&5&5&5&6&7&7&8&9\\
\end{tabular}
\caption{Values of $\sigma(m,3)$ and $\sigma^\ast(m,3)$ for small $m$}\label{table:0}
\end{table}

We find that $\sigma^\ast(m,k)$ behaves more regularly than $\sigma(m,k)$.

\begin{lemma}\label{lemma:robust}
Let $X_1$ and $X_2$ be two disjoint sets with $|X_1|,|X_2|>k$. Let $S_1$ be a robust separating set for the $k$-subsets of $X_1$ and let $S_2$ be a robust separating set for the $k$-subsets of $X_2$. Let $X=X_1\cup X_2$ and $S=S_1\cup S_2$. Then $S$ is a robust separating set for the $k$-subsets of $X$. In particular,
\[
\sigma^\ast(|X_1|+|X_2|,k)\le \sigma^\ast(|X_1|,k)+\sigma^\ast(|X_2|,k).
\]
\end{lemma}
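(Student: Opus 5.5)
The key observation is that intersections split along the decomposition. Every $\alpha\in S_1$ is contained in $X_1$, so for any subset $\omega\subseteq X$ we have $|\omega\cap\alpha| = |(\omega\cap X_1)\cap\alpha|$. Similarly, $|\omega\cap\alpha| = |(\omega\cap X_2)\cap\alpha|$ for $\alpha\in S_2$. So $S_1$ only sees the trace of $\omega$ on $X_1$, and $S_2$ only sees its trace on $X_2$. The plan is to take two distinct subsets $\omega,\omega'\subseteq X$ that are either both of size $k$ or both of size $<k$, and to show that some $\alpha\in S$ satisfies $|\omega\cap\alpha|\ne|\omega'\cap\alpha|$.

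Write $\omega_j=\omega\cap X_j$ and $\omega'_j=\omega'\cap X_j$ for $j=1,2$. Since $\omega\ne\omega'$, there is some $j$ with $\omega_j\ne\omega'_j$; by symmetry take $j=1$. We then split into cases according to the sizes $|\omega_1|$ and $|\omega'_1|$, both of which are at most $|\omega|\le k$ and at most $|\omega'|\le k$.

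\emph{Case 1.} Both $|\omega_1|$ and $|\omega'_1|$ are $<k$. Then the second clause of robustness for $S_1$ applies, and $\omega_1,\omega'_1$ are separated by some $\alpha\in S_1$. By the observation above, $\alpha$ also separates $\omega,\omega'$.

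\emph{Case 2.} Both $|\omega_1|$ and $|\omega'_1|$ equal $k$. Then $\omega_1,\omega'_1$ are distinct $k$-subsets of $X_1$, and the first clause for $S_1$ gives a separating $\alpha\in S_1$.

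\emph{Case 3.} Exactly one of them has size $k$, say $|\omega_1|=k>|\omega'_1|$. This is the case I expect to be the main obstacle, because robustness does not directly compare a $k$-set with a smaller set. I would first try to use the condition $|X_1|>k$: choose a point $x\in X_1\setminus\omega_1$ and compare sizes. In fact this case can be handled more cheaply. Since $|\omega_1|=k$ forces $\omega=\omega_1$, if $|\omega'|=k$ as well then $\omega'_1\ne\omega'$, so $\omega'_2\ne\emptyset=\omega_2$. Both $\omega_2=\emptyset$ and $\omega'_2$ are sets of size $<k$ (if $|\omega'_2|=k$ then $\omega'_1=\emptyset$, and we can instead argue on $X_2$ with the roles reversed, where $\omega'_2$ has size $k$ and $\omega_2=\emptyset$ has size $<k$). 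The sizes $|\omega|$ and $|\omega'|$ are both $k$ or both $<k$, so whenever exactly one of $|\omega_1|,|\omega'_1|$ equals $k$, the mismatch must be compensated on $X_2$. The resulting pair on $X_2$ consists of distinct sets of size $<k$, or, in the reversed situation, of one set of size $k$ and the empty set.

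So the genuinely remaining subcase is a $k$-set $\beta$ versus a set $\gamma$ of size $<k$ inside a single $X_j$, with $\gamma$ possibly empty. To handle it I would prove a small auxiliary claim: if $S_j$ is robust on $X_j$ and $|X_j|>k$, then $S_j$ separates any $k$-subset from any smaller subset. The idea is to pick $y\in\beta$ and consider the $(k-1)$-set $\beta\setminus\{y\}$. If $\beta\setminus\{y\}\ne\gamma$, robustness separates them. I would then use the fact that the sets $\alpha\in S_j$ cannot all miss $y$: two distinct $(k-1)$-sets differing only in $y$ must be separated, and this requires some $\alpha$ to contain $y$. Combining these facts should yield the claim, possibly after an adjustment argument. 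Here the hypothesis $|X_j|>k$ provides room, since for each $y$ there are at least two distinct $(k-1)$-subsets to play with.

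Finally, the inequality $\sigma^\ast(|X_1|+|X_2|,k)\le\sigma^\ast(|X_1|,k)+\sigma^\ast(|X_2|,k)$ follows by taking $S_1$ and $S_2$ of minimum size and noting that $|S|\le|S_1|+|S_2|$.
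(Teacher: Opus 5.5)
Your reduction in Case~3 is correct, but you do not notice where it ends up, and the auxiliary claim you then propose is false. Follow your own argument. If $|\omega_1|=k>|\omega'_1|$, then $\omega=\omega_1$, so $|\omega'|=k$ and $\omega'_2\neq\emptyset=\omega_2$. Either $|\omega'_2|<k$, and the second clause of robustness for $S_2$ separates them, or $|\omega'_2|=k$. In the latter case $\omega\subseteq X_1$ and $\omega'\subseteq X_2$ are both $k$-sets. So the only surviving configuration is a $k$-set against the \emph{empty} set inside $X_1$; a nonempty $\gamma$ never arises.

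The general claim, that a robust $S_j$ separates every $k$-subset of $X_j$ from every smaller subset, does not hold. Take $k=2$, $X_j=\{a,b,c\}$ and $S_j=\{\{a,c\},\{b,c\}\}$. The intersection vectors of $\{a,b\},\{a,c\},\{b,c\}$ are $(1,1),(2,1),(1,2)$. Those of $\emptyset,\{a\},\{b\},\{c\}$ are $(0,0),(1,0),(0,1),(1,1)$. So $S_j$ is robust, yet $\{a,b\}$ and $\{c\}$ both give $(1,1)$. Your sketch breaks for the same reason. Separating $\beta\setminus\{y\}$ from $\gamma$ tells you nothing about $\beta$ versus $\gamma$, because adding $y$ raises the count by one exactly on those $\alpha$ containing $y$. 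The phrase ``possibly after an adjustment argument'' is covering a step that cannot be carried out.

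What is actually needed is one line, and it is the paper's final step: find $\alpha\in S_1$ meeting $\omega$, since then $|\omega\cap\alpha|\ge 1>0=|\omega'\cap\alpha|$. To get it, pick $y\in\omega$. Since $\{y\}$ and $\emptyset$ are distinct subsets of $X_1$ of size $<k$, robustness gives some $\alpha\in S_1$ containing $y$.

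This uses $k\ge 2$, not the hypothesis $|X_j|>k$ that you point to. For $k=1$ the second clause is vacuous and the lemma itself fails. Take $X_1=\{1,2\}$, $S_1=\{\{1\}\}$, $X_2=\{3,4\}$, $S_2=\{\{3\}\}$; then $\{2\}$ and $\{4\}$ are not separated by $S_1\cup S_2$. You should therefore state $k\ge 2$ explicitly; the paper's argument also needs it tacitly.
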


\begin{proof}
Let $\omega_1$ and $\omega_2$ be two distinct subsets of $X$ with either $|\omega_1|,|\omega_2|<k$ or $|\omega_1|=k=|\omega_2|$. Since $\omega_1\ne\omega_2$, there exists $i\in \{1,2\}$ such that $X_i\cap\omega_1\ne X_i\cap\omega_2$.

If either $|X_i\cap \omega_{1}|,|X_i\cap\omega_2|<k$ or $|X_i\cap\omega_1|=k=|X_i\cap\omega_2|$, then there exists $\alpha\in S_i$ with
\[
|(X_i\cap \omega_{1})\cap\alpha|\ne |(X_i\cap \omega_{2})\cap\alpha|,
\]
since $S_i$ is robust. As $\alpha\subseteq X_i$, it follows that $|\omega_1\cap\alpha|\ne |\omega_2\cap\alpha|$, and hence $\alpha$ separates $\omega_1$ and $\omega_2$.

Thus, relabeling if necessary, we may assume that $\omega_1\subseteq X_1$, $\omega_2\subseteq X_2$ and $|\omega_1|=k=|\omega_2|$. Since $|X_1|>k$, we have $S_1\ne\emptyset$, and hence there exists $\alpha\in S_1$ with $\omega_1\cap\alpha\ne\emptyset$, while $\omega_2\cap\alpha=\emptyset$. Therefore $\alpha$ separates $\omega_1$ and $\omega_2$, and $S$ is robust.

The inequality follows by taking $S_i$ with $|S_i|=\sigma^\ast(|X_i|,k)$.
\end{proof}
In the next corollary, we show how Lemma~\ref{lemma:robust} can be applied.
\begin{corollary}\label{cor:1}
Let $m=9q+r$ where $q,r\in\mathbb{N}$ and $0\le r<9$. Then $\sigma(m,3)\le 2q+2r\le \frac{2m}{3}+16$.
\end{corollary}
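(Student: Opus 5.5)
The plan is to combine the subadditivity of $\sigma^\ast(\cdot,3)$ from Lemma~\ref{lemma:robust} with the one-point extension bound of Lemma~\ref{lemma:plusone}. For $k=3$ the latter reads $\sigma(m+1,3)\le\sigma(m,3)+\lfloor 3/2\rfloor+1=\sigma(m,3)+2$ whenever $m\ge 5$. The leftover $r$ points will therefore contribute the term $2r$, and the $q$ blocks of size $9$ must contribute the term $2q$.

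\emph{Block part.} Write $[9q]$ as a disjoint union of $q$ sets $X_1,\dots,X_q$, each of size $9$; note $|X_j|=9>3$. On each $X_j$ place a robust separating set $S_j$ for the $3$-subsets of $X_j$. By induction on $q$, Lemma~\ref{lemma:robust} shows that $S_1\cup\cdots\cup S_q$ is a robust separating set for $\binom{[9q]}{3}$, so
\[
\sigma(9q,3)\le\sigma^\ast(9q,3)\le q\,\sigma^\ast(9,3).
\]
To obtain the coefficient $2$ in front of $q$ as written, I need $\sigma^\ast(9,3)\le 2$. This is the step I expect to be the main obstacle. The sizes $|S_j|$ are the only place where the coefficient of $q$ enters, and Table~\ref{table:0} records $\sigma^\ast(9,3)=6$, not $2$. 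Indeed, any separating set is a base by Lemma~\ref{lemma:0}, and two $3$-subsets of a $9$-set cannot form a base for $\mathrm{Sym}(9)$: their pointwise stabilizer still contains a transposition of two points lying in the same cell of the induced partition. So I would first re-derive the per-block cost by exhibiting an explicit $6$-element robust family on $9$ points and checking it by hand. I expect the chain to give only $6q$ in place of $2q$, so the first inequality as printed cannot be reached this way. I would flag this rather than force it, and note that the lower bound $\frac{6m-8}{10}$ of Theorem~\ref{thrm:main} (with $k=3$) also exceeds $2q+2r$ for large $q$ with $r=0$, so the displayed first inequality appears to be a misprint.

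\emph{Remainder.} If $q\ge 1$, then $9q\ge 5=2k-1$, so Lemma~\ref{lemma:plusone} applies at each of the steps $9q\to 9q+1\to\cdots\to 9q+r$. Each step adds at most $2$, which gives the additive $2r$. If $q=0$, then $m=r\le 8$ and I would use the small values in Table~\ref{table:0} directly; these are at most $5$ for $4\le m\le 8$, and $\sigma(m,3)$ for $m\le 3$ is trivially small. This shows $\sigma(r,3)\le 2r$ for $r\ge 1$, so the case $q=0$ of the first inequality does hold.

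\emph{Final estimate.} Since $9q\le m$ and $r\le 8$, we get $2q\le 2m/9\le 2m/3$ and $2r\le 16$, hence $2q+2r\le \frac{2m}{3}+16$. The same argument gives $6q+2r\le\frac{2m}{3}+16$, because $6q=\frac{2}{3}\cdot 9q\le\frac{2m}{3}$. So the second inequality holds whichever coefficient the block step actually yields. The only nontrivial work is the verification of the $9$-point robust family.
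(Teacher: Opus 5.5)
Your proposal is correct and follows the paper's proof. The paper exhibits the robust set $\{\{1,2,3\},\{1,2,4\},\{1,2,5\},\{1,6,8\},\{1,6,9\},\{2,6,7\}\}$ to get $\sigma^\ast(9,3)\le 6$, uses Lemma~\ref{lemma:robust} to obtain $\sigma(9q,3)\le\sigma^\ast(9q,3)\le 6q$, and adds the $2r$ term via Lemma~\ref{lemma:plusone}. You are right that the printed $2q+2r$ is a misprint for $6q+2r$: the paper's own chain ends with $\sigma(m,3)\le 6q+2r\le 6q+16$, and already $\sigma(9,3)=6>2$. Your separate treatment of $q=0$ via Table~\ref{table:0} also covers a small point the paper skips, since Lemma~\ref{lemma:plusone} requires $m\ge 5$.
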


\begin{proof}
From Table~\ref{table:0}, $\sigma^\ast(9,3)=6$. For instance,
\[
\{\{1,2,3\},\{1,2,4\},\{1,2,5\},\{1,6,8\},\{1,6,9\},\{2,6,7\}\}
\]
is a robust separating set for ${[9]\choose 3}$. Hence, by Lemma~\ref{lemma:robust}, we have
$\sigma^\ast(9q,3)\le q\,\sigma^\ast(9,3)=6q$.
Thus $\sigma(9q,3)\le \sigma^\ast(9q,3)\le 6q$. Finally, Lemma~\ref{lemma:plusone} gives
$\sigma(m,3)\le \sigma(9q,3)+2r\le 6q+2r\le 6q+16$.
\end{proof}

The argument in Corollary~\ref{cor:1} appears deceptively simple, but the upper bound obtained is already asymptotically better than the strongest upper bound in~\cite{baileyyy} (namely $\lfloor 3(m+1)/4\rfloor$). The difficulty in applying Lemma~\ref{lemma:robust} for general values of $k$ lies in constructing small robust separating sets. We overcome this difficulty in the next section.

\subsection{An adjacency matrix and some notation}\label{sec:adja}

Let $S$ be a subset of ${[m]\choose k}$ of cardinality $\ell$.
We consider the $m\times\ell$ matrix $M_S$, whose rows are indexed by the elements of $[m]$ and whose columns are indexed by the elements of $S$. We set $(M_S)_{i,s}=1$ if $i\in s$ and $(M_S)_{i,s}=0$ otherwise. Since every element $s\in S$ has cardinality $k$, each column of $M_S$ has exactly $k$ ones.

Given $i\in [m]$, we let $(M_S)_i$ denote the $i$th row of $M_S$, and we say that $(M_S)_i$ has \textit{\textbf{weight}} $x$ if it has exactly $x$ ones, that is, if $i$ belongs to exactly $x$ elements of $S$. The \textit{\textbf{support}} of $(M_S)_i$ is the collection of all $s\in S$ with $(M_S)_{i,s}=1$, that is, the set of elements of $S$ containing $i$. More generally, given rows $(M_S)_{i_1},\ldots,(M_S)_{i_t}$, we define their support as the union of their supports.

The fact that $S$ is separating can be detected using the matrix $M_S$.

\begin{lemma}\label{lemma:sepsub1}
When $m\ge 2k-1$, the set $S$ is separating for ${[m]\choose k}$ if and only if, for every $i\in \{0,\ldots,k\}$ and for any two distinct $i$-subsets $x$ and $y$ of $[m]$, we have
\[
\sum_{j\in x}(M_S)_j \ne \sum_{j\in y}(M_S)_j.
\]
\end{lemma}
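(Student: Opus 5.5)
The plan is to translate intersection sizes into row sums of $M_S$, and then use Lemma~\ref{lemma:sepsub} for one direction and the definition of separating for the other.

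\emph{Key identity.} For any subset $x\subseteq[m]$ and any $s\in S$, the $s$-coordinate of $\sum_{j\in x}(M_S)_j$ equals $\sum_{j\in x}(M_S)_{j,s}=|x\cap s|$. Hence
\[
\sum_{j\in x}(M_S)_j=\bigl(|x\cap s|\bigr)_{s\in S}.
\]
Consequently, two subsets $x,y$ satisfy $\sum_{j\in x}(M_S)_j\ne\sum_{j\in y}(M_S)_j$ if and only if some $s\in S$ has $|x\cap s|\ne|y\cap s|$.

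\emph{Forward direction.} Assume $S$ is separating for ${[m]\choose k}$. Let $x,y$ be distinct $i$-subsets with $0\le i\le k$; note that $i=0$ is vacuous, since there is only one $0$-subset. Because $m\ge 2k-1$, Lemma~\ref{lemma:sepsub} provides $\alpha\in S$ with $|x\cap\alpha|\ne|y\cap\alpha|$. By the key identity, the row sums of $x$ and $y$ differ.

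\emph{Reverse direction.} Specialise the hypothesis to $i=k$. For distinct $k$-subsets $\omega,\omega'$, the vectors $\sum_{j\in\omega}(M_S)_j$ and $\sum_{j\in\omega'}(M_S)_j$ differ, so some $s\in S$ has $|\omega\cap s|\ne|\omega'\cap s|$. Since the $G_s$-orbits on ${[m]\choose k}$ are exactly the intersection-size classes (Section~\ref{sec:preliminaries}), $s$ separates $\omega$ and $\omega'$. Therefore $S$ is separating.

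No step is a real obstacle. The only substantive input is Lemma~\ref{lemma:sepsub}, which is where the hypothesis $m\ge 2k-1$ is used; the reverse direction does not need that hypothesis at all.
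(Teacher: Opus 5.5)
Your proposal is correct and follows essentially the same route as the paper: the $k$-subset case is exactly the definition of separating, read through the identity $\bigl(\sum_{j\in x}(M_S)_j\bigr)_s=|x\cap s|$, and the extension to all $i\le k$ comes from Lemma~\ref{lemma:sepsub}. You spell out the coordinate identity and the trivial $i=0$ case more explicitly than the paper does, and you correctly note that $m\ge 2k-1$ is needed only for the forward direction.
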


\begin{proof}
By the definition of $M_S$, the set $S$ is separating if and only if for any two distinct $k$-subsets $x$ and $y$ of $[m]$ we have
$\sum_{j\in x}(M_S)_j \ne \sum_{j\in y}(M_S)_j$.
The general statement then follows from Lemma~\ref{lemma:sepsub}.
\end{proof}

For each $x\in \{0,\ldots,\ell\}$, let $D_x$ be the set of all elements $i\in [m]$ such that $(M_S)_i$ has weight $x$. Then $D_0,\ldots,D_\ell$ form a partition of $[m]$, and we let $d_x=|D_x|$. Thus
\begin{equation}\label{bestio1}
m=\sum_{x=0}^\ell d_x.
\end{equation}

\begin{example}\label{exa:1}{\rm For instance, $$S=\{
\{2, 11, 13\},
\{1, 6, 10\},
\{1, 5, 8\},
\{4, 5, 13\},
\{7, 10, 12\},
\{3, 12, 13\},
\{2, 9, 12\},
\{5, 6, 7\}
\}$$
is a separating set for ${[13]\choose 3}$. Here $D_0=\emptyset$, $D_1=\{3,4,8,9,11\}$, $D_2=\{1,2,6,7,10\}$, $D_3=\{5,12,13\}$ and $D_i=\emptyset$ for every $i\ge 4$.
}
\end{example}
Now consider the set $\{(i,s)\in [m]\times S\mid i\in s\}$. By double counting its elements, we obtain
\begin{equation}\label{bestio2}
k\ell=\sum_{x=0}^\ell x d_x.
\end{equation}

For the rest of this section, we assume that $S$ is separating for ${[m]\choose k}$.
We claim that
\begin{equation}\label{bestio4}
d_0\le 1.
\end{equation}
Assume, by contradiction, that $d_0\ge 2$. Then there exist two distinct elements $i,j\in [m]$ such that $(M_S)_i$ and $(M_S)_j$ have weight zero, and hence $(M_S)_i=(M_S)_j$. This contradicts Lemma~\ref{lemma:sepsub1} (applied with $x=\{i\}$ and $y=\{j\}$), since $S$ is separating.

We now claim that
\begin{align}\label{bestio5}
\forall i,j\in D_1,\ \text{ if }(M_S)_i=(M_S)_j,\ \text{ then } i=j.
\end{align}
Recall that the elements in $D_1$ are those of weight $1$. Suppose, for a contradiction, that there exist two distinct elements $i,j\in D_1$ with $(M_S)_i=(M_S)_j$. Again, this contradicts Lemma~\ref{lemma:sepsub1} applied with $x=\{i\}$ and $y=\{j\}$. Hence~\eqref{bestio5} holds, and in particular
$d_1\le \ell$.

Now write $S=\{s_1,\ldots,s_\ell\}$ and define
\[
A=\{s\in S\mid \text{there exists } j\in D_1 \text{ with } j\in s\}.
\]
The set $A$ admits a natural interpretation in terms of the matrix $M_S$: it consists of the supports of the elements of $D_1$. By~\eqref{bestio5}, no two elements of $D_1$ have the same support. We also let $B=S\setminus A$.

Next, using the elements in $D_2$, we construct a graph $\Gamma$ with vertex set $S$. Observe that, if $i\in D_2$, then $(M_S)_i$ has weight $2$, that is, $i$ is contained in exactly two elements of $S$. We declare $\{s_x,s_y\}$ to be an edge of $\Gamma$ if there exists $i\in D_2$ such that $(M_S)_i$ has support $\{s_x,s_y\}$. Now, let 
\begin{itemize}
\item $e(A,A)$ be the number of edges of $\Gamma$ within $A$,
\item $e(B,B)$  be the number of edges of $\Gamma$ within $B$, and we let 
\item $e(A,B)$ be the number of edges of $\Gamma$ having one end in $A$ and one end in $B$.
\end{itemize}

\begin{example}{\rm In Example~\ref{exa:1}, if we label the elements in $S$ with  $s_1=\{2, 11, 13\}$,
$s_2=\{1, 6, 10\}$,
$s_3=\{1, 5, 8\}$,
$s_4=\{4, 5, 13\}$,
$s_5=\{7, 10, 12\}$,
$s_6=\{3, 12, 13\}$,
$s_7=\{2, 9, 12\}$,
$s_8=\{5, 6, 7\}$, then $A=\{s_1,s_3,s_4,s_6,s_7\}$ and $B=\{s_2,s_5,s_8\}$. The graph $\Gamma$ for this example is depicted in Figure~\ref{fig:ex1}. Thus $e(A,A)=1$, $e(A,B)=1$ and $e(B,B)=3$.
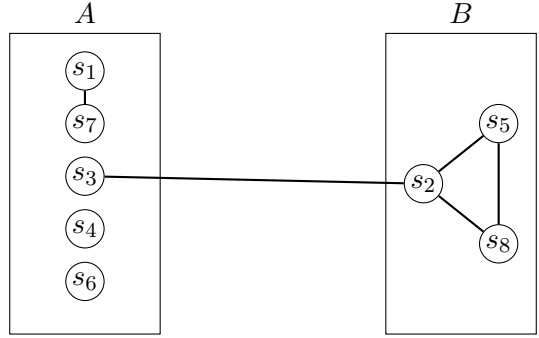
\begin{figure}[!ht]
\begin{tikzpicture}[
    node_style/.style={circle, draw, minimum size=2mm, inner sep=1pt},
    box/.style={draw, rectangle, minimum width=2cm, minimum height=4cm}
]

\node[box] (Abox) at (-1,0) {};
\node[above] at (Abox.north) {$A$};

\node[node_style] (s1) at (-1,1.5) {$s_1$};
\node[node_style] (s7) at (-1.0,0.8) {$s_7$};
\node[node_style] (s3) at (-1.0,0.1) {$s_3$};
\node[node_style] at (-1.0,-0.6) {$s_4$};
\node[node_style] at (-1.0,-1.3) {$s_6$};
\draw[thick] (s1)--(s7);
\node[box] (Bbox) at (4,0) {};
\node[above] at (Bbox.north) {$B$};

\node[node_style](s5) at (4.5,0.8) {$s_5$};
\node[node_style](s2) at (3.5,0.0) {$s_2$};
\node[node_style] (s8) at (4.5,-0.8) {$s_8$};
\draw[thick](s2)--(s3);
\draw[thick](s2)--(s5);
\draw[thick](s2)--(s8);
\draw[thick](s5)--(s8);
\end{tikzpicture}
\caption{The auxiliary graph $\Gamma$ for the separating set in Example~\ref{exa:1}.}\label{fig:ex1}
\end{figure}
}

\end{example}

Observe that $\Gamma$ is a simple graph, that is, it has no multiple edges. Indeed, there are no two distinct elements $i,j\in [m]$ with $(M_S)_i=(M_S)_j$, since otherwise Lemma~\ref{lemma:sepsub1} would contradict the fact that $S$ is separating. Therefore
$d_2$ equals the number of edges of $\Gamma$, that is, $d_2=e(A,A)+e(A,B)+e(B,B)$.

We claim that
\begin{equation}\label{bestio7}
\text{each vertex of }\Gamma \text{ has valency at most } k.
\end{equation}
Indeed, if $s\in S$ has valency greater than $k$, then $s$ contains more than $k$ elements of $D_2$, which is impossible since every element of $S$ has cardinality $k$.

We claim that
\begin{equation}\label{bestio8}
A \text{ contains at most one edge of }\Gamma, \text{ that is, }e(A,A)\le1.
\end{equation}
Suppose, for a contradiction, that $A$ contains at least two edges $\{s_x,s_y\}$ and $\{s_{x'},s_{y'}\}$ of $\Gamma$. Since $s_x,s_y,s_{x'},s_{y'}\in A$, there exist elements $i_x,i_y,i_{x'},i_{y'}\in D_1$ such that $(M_S)_{i_x}$ has support $s_x$, $(M_S)_{i_y}$ has support $s_y$, $(M_S)_{i_{x'}}$ has support $s_{x'}$, and $(M_S)_{i_{y'}}$ has support $s_{y'}$. Moreover, there exist $i,i'\in D_2$ such that $(M_S)_i$ has support $\{s_x,s_y\}$ and $(M_S)_{i'}$ has support $\{s_{x'},s_{y'}\}$.
Then
\[
(M_S)_i+(M_S)_{i_{x'}}+(M_S)_{i_{y'}}=(M_S)_{i'}+(M_S)_{i_x}+(M_S)_{i_y},
\]
which contradicts Lemma~\ref{lemma:sepsub1}, since $S$ is separating.

We claim that
\begin{equation}\label{bestio9}
\forall u\in B,\ \text{there exists at most one }a\in A \text{ with }\{a,u\} \text{ an edge of }\Gamma.
\end{equation}
Suppose, for a contradiction, that there exists $u\in B$ and two distinct elements $a,a'\in A$ such that both $\{a,u\}$ and $\{a',u\}$ are edges of $\Gamma$. By the definition of $D_1$, there exist $i,i'\in [m]$ such that $(M_S)_{i}$ has support $a$ and $(M_S)_{i'}$ has support $a'$. Similarly, by the definition of $D_2$, there exist $j,j'\in [m]$ such that $(M_S)_{j}$ has support $\{a,u\}$ and $(M_S)_{j'}$ has support $\{a',u\}$. Then
\[
(M_S)_{j}+(M_S)_{i'}=(M_S)_{i}+(M_S)_{j'},
\]
which contradicts Lemma~\ref{lemma:sepsub1}, since $S$ is separating.
Now, since $|B|=\ell-d_1$, and each vertex in $A$ has at most $k-1$ neighbours in $B$ by~\eqref{bestio7}, it follows from~\eqref{bestio9} that
\begin{equation}\label{bestio9bis}
e(A,B)\le \min(\ell-d_1,(k-1)d_1).
\end{equation}

Next, consider the set $\{(s,e)\mid s\in B,\ e \text{ an edge of }\Gamma,\ s\in e\}$. By double counting its elements and using~\eqref{bestio7}, we obtain
$e(A,B)+2e(B,B)\le k|B|=k(\ell-d_1)$.
Thus, by~\eqref{bestio8} and~\eqref{bestio9bis}, we have
\[
2d_2=2e(A,A)+2e(A,B)+2e(B,B)\le 2+(k+1)(\ell-d_1).
\]
Hence
\begin{equation}\label{bestio10}
d_2\le 1+\frac{(k+1)}{2}(\ell-d_1).
\end{equation}

\begin{proof}[Proof of Theorem~$\ref{thrm:main}$]We start by proving the lower bound on $\sigma(m,k)$. Using~\eqref{bestio1},~\eqref{bestio2},~\eqref{bestio4} and~\eqref{bestio10}, we obtain
\begin{align}\label{eq:mondaymorning}
k\ell
&\ge d_1+2d_2+3\sum_{x=3}^\ell d_x=d_1+2d_2+3\left(\sum_{x=0}^\ell d_x-d_0-d_1-d_2\right)\\\nonumber
&=d_1+2d_2+3(m-d_0-d_1-d_2)=3m-(3d_0+2d_1+d_2)\\\nonumber
&\ge 3m-4-\frac{k+1}{2}(\ell-d_1)-2d_1=3m-4-\frac{k+1}{2}\ell+\frac{k-3}{2}d_1.\nonumber
\end{align}
Since $k\ge 3$, we have $\frac{k-3}{2}d_1\ge 0$, and hence
$\frac{3k+1}{2}\,\ell \ge 3m-4$.
This yields the desired lower bound.

\smallskip

We now turn to the upper bound. Set
$k'=4\big((k-1)^{2k}-1\big)/(k-2)$.
Let $\Delta$ be a $k$-regular graph of girth at least $2k+1$ with the minimum possible number of vertices. Recall that a graph is $k$-regular if every vertex has valency $k$, and that the girth is the length of a shortest cycle. By a theorem of Erd\H{o}s and Sachs~\cite{ES}, the number of vertices of $\Delta$ is at most $k'$.
Let $X_\Delta$ be the set of edges of $\Delta$, and let $S_\Delta$ be the set of vertices. Let $M_{S_\Delta}$ be the incidence matrix of $\Delta$, with rows indexed by edges and columns indexed by vertices, defined by
\[
(M_{S_\Delta})_{e,v}=1 \quad\text{if and only if}\quad v\in e.
\]
Then each row contains exactly $2$ ones and each column contains exactly $k$ ones.

Using this matrix, each column (i.e.\ each vertex) determines a $k$-subset of $X_\Delta$, namely the set of edges incident with that vertex. In this way we obtain a family of $|S_\Delta|$ $k$-subsets of $X_\Delta$. With a slight abuse of notation, we identify this family with $S_\Delta$ itself.

We claim that $S_\Delta$ is a robust separating set. Suppose not. Then, by Lemma~\ref{lemma:sepsub1}, there exist two distinct collections $\{e_1,\ldots,e_x\}$ and $\{e'_1,\ldots,e'_{x'}\}$ of edges of $\Delta$, with either $x,x'<k$ or $x=k=x'$, such that
\begin{equation}\label{equation:girth}
\sum_{i=1}^x (M_{S_\Delta})_{e_i}
=
\sum_{i=1}^{x'} (M_{S_\Delta})_{e'_i}.
\end{equation}
Here each $(M_{S_\Delta})_{e_i}$ is a row vector indexed by vertices.

Now consider the subgraph $\Delta'$ of $\Delta$ formed by the edges $e_1,\ldots,e_x,e'_1,\ldots,e'_{x'}$. Since $\Delta$ has no cycles of length $\le 2k$, and $\Delta'$ has at most $x+x'\le 2k$ edges, it follows that $\Delta'$ contains no cycles, and hence is a forest. Therefore $\Delta'$ has a vertex $v$ of valency $1$. Without loss of generality, suppose that $v$ is incident to an edge in $\{e_1,\ldots,e_x\}$. Then the $v$-coordinate of the left-hand side of~\eqref{equation:girth} is $1$, while the $v$-coordinate of the right-hand side is $0$, since $v$ is not incident to any edge among $e'_1,\ldots,e'_{x'}$. This is a contradiction.
Thus $S_\Delta$ is a robust separating set. It follows that
\[
\frac{\sigma^\ast(k',k)}{k'}
\le
\frac{|S_\Delta|}{k'}
=
\frac{|S_\Delta|}{|X_\Delta|}
=
\frac{2}{k},
\]
where the last equality follows from the fact that $\Delta$ is $k$-regular, so $k|S_\Delta|=2|X_\Delta|$. The proof now follows as in Corollary ~\ref{cor:1}.
\end{proof}

\subsection{The case $k=3$: proof of the lower bound in Theorem~\ref{thrm:main2}}\label{sec:k=3}

In this section we use the notation of Section~\ref{sec:adja} and refine the bounds established there, specifically for $k=3$.

To obtain this refinement, we let $B_1$ be the set of elements $b\in B$ such that there exists an edge $\{b,a\}$ of $\Gamma$ with $a\in A$, and we let $B_0=B\setminus B_1$. With this notation,~\eqref{bestio9} becomes
\begin{equation}\label{bestio11}
|B_1|=e(A,B).
\end{equation}

Now, the vertices in $B_0$ have neighbours only in $B$, and hence they have valency at most $3$. Similarly, the vertices in $B_1$ have at most $2$ neighbours in $B$, since each of them has one neighbour in $A$. Thus, by double counting the degrees of the vertices in the subgraph of $\Gamma$ induced on $B$, we obtain
$2e(B,B)\le 3|B_0|+2|B_1|$.
Therefore,
\begin{align*}
d_2
&=e(A,A)+e(A,B)+e(B,B)\le 1+|B_1|+\frac{3}{2}|B_0|+|B_1|=1+2(|B_0|+|B_1|)-\frac{|B_0|}{2}\\
&=1+2|B|-\frac{|B_0|}{2}
=1+2\ell-2d_1-\frac{|B_0|}{2}.
\end{align*}
Using~\eqref{bestio4}, we deduce
\[
3d_0+2d_1+d_2\le 4+2\ell-\frac{|B_0|}{2}.
\]
Arguing as in~\eqref{eq:mondaymorning}, we obtain
\begin{equation}\label{bestio11bis}
\ell\ge \frac{3m-4}{5}+\frac{|B_0|}{10}.
\end{equation}

As above, we let 
\begin{itemize}
\item $e(B_0,B_0)$ be the number of edges of $\Gamma$ within $B_0$,
\item $e(B_1,B_1)$ be the number of edges of $\Gamma$ within $B_1$,
\item $e(B_0,B_1)$ be the number of edges of $\Gamma$ having one end in $B_0$ and one end in $B_1$.
\end{itemize}
Thus $e(B,B)=e(B_0,B_0)+e(B_0,B_1)+e(B_1,B_1)$.

We claim that
\begin{equation}\label{eq:monday1}
\text{the subgraph of }\Gamma\text{ induced on }B\text{ has no paths of length }2\text{ whose endvertices lie in }B_1.
\end{equation}
Indeed, suppose, for a contradiction, that there exists a path of length $2$, say $b_1,b_2,b_3$ in $B$ with $b_1,b_3\in B_1$. By definition of $B_1$, there exist $a_1,a_3\in A$ such that $\{a_1,b_1\}$ and $\{a_3,b_3\}$ are edges of $\Gamma$. In particular, there exist $x_1,x_2,x_3,x_4\in D_2$ such that $(M_S)_{x_1}$ has support $\{a_1,b_1\}$, $(M_S)_{x_2}$ has support $\{b_1,b_2\}$, $(M_S)_{x_3}$ has support $\{b_2,b_3\}$, and $(M_S)_{x_4}$ has support $\{a_3,b_3\}$. Moreover, since $a_1,a_3\in A$, there exist $x_0,x_5\in D_1$ such that $(M_S)_{x_0}$ has support $a_1$ and $(M_S)_{x_5}$ has support $a_3$. See Figure~\ref{fig1}.
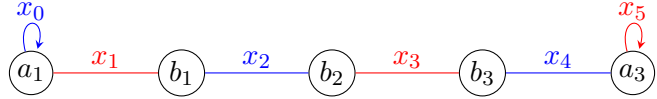
\begin{figure}[!ht]
\begin{tikzpicture}[>=stealth, every node/.style={inner sep=1.5pt}]
  \node[circle, draw] (a1) at (0,0) {$a_1$};
  \node[circle, draw] (b1) at (2,0) {$b_1$};
  \node[circle, draw] (b2) at (4,0) {$b_2$};
  \node[circle, draw] (b3) at (6,0) {$b_3$};
  \node[circle, draw] (a3) at (8,0) {$a_3$};

  \draw[red] (a1) -- node[above] {$x_1$} (b1);
  \draw[blue] (b1) -- node[above] {$x_2$} (b2);
  \draw[red] (b2) -- node[above] {$x_3$} (b3);
  \draw[blue] (b3) -- node[above] {$x_4$} (a3);

  \draw[blue] (a1) edge[loop above] node {$x_0$} ();
  \draw[red] (a3) edge[loop above] node {$x_5$} ();
\end{tikzpicture}
\caption{Auxiliary figure for the proof of~\eqref{eq:monday1}}\label{fig1}
\end{figure}
Therefore,
\[
(M_S)_{x_0}+(M_S)_{x_2}+(M_S)_{x_4}=(M_S)_{x_1}+(M_S)_{x_3}+(M_S)_{x_5},
\]
which contradicts Lemma~\ref{lemma:sepsub1}.

Now,~\eqref{eq:monday1} implies that the subgraph of $\Gamma$ induced on $B_1$ is a disjoint union of edges and isolated vertices. In particular,
\begin{equation}\label{bestio12}
e(B_1,B_1)\le \frac{|B_1|}{2}.
\end{equation}
Similarly,~\eqref{eq:monday1} implies that every element of $B_0$ has at most one neighbour in $B_1$, and hence
\begin{equation}\label{bestio13}
e(B_0,B_1)\le |B_0|.
\end{equation}

Observe that, in the subgraph induced by $\Gamma$ on $B_0$, there are at most $|B_0|-e(B_0,B_1)$ vertices of degree $3$, since each vertex of $B_0$ having a neighbour in $B_1$ can have at most $2$ neighbours within $B_0$. Therefore,
\begin{equation}\label{monday2}
2e(B_0,B_0)\le 3(|B_0|-e(B_0,B_1))+2e(B_0,B_1)=3|B_0|-e(B_0,B_1).
\end{equation}

Thus, from~\eqref{bestio12},~\eqref{bestio13} and~\eqref{monday2}, we deduce
\begin{align}\label{bestio23}
e(B,B)
&=e(B_0,B_0)+e(B_0,B_1)+e(B_1,B_1)\le \frac{3}{2}|B_0|+\frac{1}{2}e(B_0,B_1)+\frac{|B_1|}{2}\\
&\le \frac{3}{2}|B_0|+\frac{|B_0|}{2}+\frac{|B_1|}{2}=2|B_0|+\frac{|B_1|}{2}.\nonumber
\end{align}

Hence, from this bound and from~\eqref{bestio8} and~\eqref{bestio11}, we obtain
\begin{align*}
d_2
&=e(A,A)+e(A,B)+e(B,B)\le 1+|B_1|+2|B_0|+\frac{|B_1|}{2}\\
&=1+2|B|-\frac{|B_1|}{2}
=1+2\ell-2d_1-\frac{|B_1|}{2}.
\end{align*}
Using~\eqref{bestio4}, we deduce
\[
3d_0+2d_1+d_2\le 4+2\ell-\frac{|B_1|}{2},
\]
and hence, arguing as in~\eqref{eq:mondaymorning}, we obtain
\begin{equation}\label{bestio11ter}
\ell\ge \frac{3m-4}{5}+\frac{|B_1|}{10}.
\end{equation}

Combining~\eqref{bestio11bis} and~\eqref{bestio11ter}, we obtain
\begin{equation}\label{bestio4fold}
\ell\ge\frac{3m-4}{5}+\frac{1}{10}\max(|B_0|,|B_1|)
\ge \frac{3m-4}{5}+\frac{|B|}{20}
=\frac{3m-4}{5}+\frac{\ell-d_1}{20}.
\end{equation}

Next, we consider the elements in $D_3$. Recall that the support of an element of $D_3$ has cardinality $3$. Let $\{s_1,s_2,s_3\}$ be the support of $x\in D_3$. We say that $x$ has type $AAA$ if $\{s_1,s_2,s_3\}\subseteq A$, has type $AAB$ if $|\{s_1,s_2,s_3\}\cap A|=2$, has type $ABB$ if $|\{s_1,s_2,s_3\}\cap A|=1$, and has type $BBB$ if $\{s_1,s_2,s_3\}\subseteq B$. We denote by $e(A,A,A)$, $e(A,A,B)$, $e(A,B,B)$ and $e(B,B,B)$ the number of elements of each type. Thus $d_3=e(A,A,A)+e(A,A,B)+e(A,B,B)+e(B,B,B)$.

We claim that 
\begin{equation}\label{bestio16+}
\text{the supports of any two distinct elements of type }AAA\text{ are disjoint}.
\end{equation}
Suppose, for a contradiction, that $\{s_1,s_2,s_3\}$ and $\{s_4,s_5,s_6\}$ are the supports of two distinct elements $x,y\in D_3$ of type $AAA$, and that their intersection is nonempty.
If $|\{s_1,s_2,s_3\}\cap \{s_4,s_5,s_6\}|=2$, then without loss of generality we may assume that $s_1=s_4$ and $s_2=s_5$. Since $s_3,s_6\in A$, there exist $y_3,y_6\in D_1$ such that $(M_S)_{y_3}$ has support $s_3$ and $(M_S)_{y_6}$ has support $s_6$. Moreover, $(M_S)_x$ and $(M_S)_y$ have supports $\{s_1,s_2,s_3\}$ and $\{s_1,s_2,s_6\}$ respectively. Hence
\[
(M_S)_x+(M_S)_{y_6}=(M_S)_y+(M_S)_{y_3},
\]
which contradicts Lemma~\ref{lemma:sepsub1}. The case $|\{s_1,s_2,s_3\}\cap \{s_4,s_5,s_6\}|=1$ is similar. This proves~\eqref{bestio16+}.

Since $d_1=|A|$, we deduce
\begin{equation}\label{bestio16}
e(A,A,A)\le\frac{d_1}{3}.
\end{equation}

Let $b\in B$. We claim that 
\begin{equation}\label{bestio17+}
\text{there exists at most one }x\in D_3\text{ of type $AAB$ such that the support of }(M_S)_x\text{ contains }b.
\end{equation}
Suppose, for a contradiction, that there exist two distinct elements $x,x'\in D_3$ of type $AAB$ such that $b$ lies in both supports. Let $\{a_1,a_2,b\}$ and $\{a_3,a_4,b\}$ be the supports of $(M_S)_x$ and $(M_S)_{x'}$, respectively. Since $x,x'$ are of type $AAB$, we have $a_1,a_2,a_3,a_4\in A$. Hence there exist $y_1,y_2,y_3,y_4\in D_1$ such that $(M_S)_{y_i}$ has support $a_i$ for $i=1,2,3,4$. Therefore,
\[
(M_S)_x+(M_S)_{y_3}+(M_S)_{y_4}
=(M_S)_{x'}+(M_S)_{y_1}+(M_S)_{y_2},
\]
which contradicts Lemma~\ref{lemma:sepsub1}. This proves~\eqref{bestio17+}. Consequently,
\begin{equation}\label{bestio17}
e(A,A,B)\le |B|.
\end{equation}

Recall that each column of $M_S$ contains exactly $3$ ones. Counting the contributions coming from the elements of $B$, we obtain
\[
e(A,B)+2e(B,B)+e(A,A,B)+2e(A,B,B)+3e(B,B,B)\le 3|B|.
\]
Therefore,
\begin{equation}\label{bestio19}
e(A,A,B)+2e(A,B,B)+3e(B,B,B)\le 3|B|-e(A,B)-2e(B,B),
\end{equation}
and hence
\begin{equation}\label{bestio20}
e(A,A,B)+2e(A,B,B)\le 3|B|-e(A,B)-2e(B,B).
\end{equation}

Combining~\eqref{bestio19} and~\eqref{bestio20}, we obtain
\[
\frac{3}{2}e(A,A,B)+3e(A,B,B)+3e(B,B,B)
\le \frac{9}{2}|B|-\frac{3}{2}e(A,B)-3e(B,B).
\]
Using~\eqref{bestio17}, we deduce
\[
3e(A,A,B)+3e(A,B,B)+3e(B,B,B)
\le 6|B|-\frac{3}{2}e(A,B)-3e(B,B),
\]
and hence, from~\eqref{bestio16},
\begin{align*}
d_3
&=e(A,A,A)+e(A,A,B)+e(A,B,B)+e(B,B,B)\\
&\le \frac{d_1}{3}+2|B|-\frac{1}{2}e(A,B)-e(B,B).
\end{align*}

Since $d_2=e(A,A)+e(A,B)+e(B,B)$, from~\eqref{bestio8},~\eqref{bestio11} and~\eqref{bestio23}, we obtain
\begin{align*}
2d_2+d_3
&\le 2+\frac{d_1}{3}+2|B|+\frac{3}{2}e(A,B)+e(B,B)\\
&\le 2+\frac{d_1}{3}+2|B|+\frac{3}{2}|B_1|+2|B_0|+\frac{|B_1|}{2}\\
&=2+\frac{d_1}{3}+4|B|.
\end{align*}

Thus, using~\eqref{bestio4}, we obtain
$4d_0+3d_1+2d_2+d_3\le 6+4\ell-2d_1/3$.
From~\eqref{eq:mondaymorning}, we have
\[
3\ell\ge 4m-(4d_0+3d_1+2d_2+d_3)\ge 4m-6-4\ell+\frac{2}{3}d_1,
\]
and hence
\begin{equation}\label{bestio24}
\ell\ge\frac{4m-6}{7}+\frac{2d_1}{21}.
\end{equation}

Interpolating~\eqref{bestio4fold} and~\eqref{bestio24}, after some computations, we obtain
$\ell\ge \frac{36m-50}{59}$,
which proves the lower bound in Theorem~\ref{thrm:main2}.

\subsection{The case $k=3$: proof of the upper bound in Theorem~\ref{thrm:main2}}\label{sec:k=3upper}

Let $m$ be a positive integer and write $m=13q+r$ with $q,r\in\mathbb{N}$ and $0\le r<13$. To prove the upper bound in Theorem~\ref{thrm:main2}, it suffices to treat the case $r=0$, since the general case follows by repeated applications of Lemma~\ref{lemma:plusone}. Thus we assume that $m=13q$. 

For $q\in \{1,2\}$, the existence of a separating set of cardinality $8q$ can be verified by computer. Hence, for the remainder of the argument we assume that $q\ge 3$.

We construct a separating set $S$ for ${[m]\choose 3}$ by specifying its adjacency matrix $M_S$ (see Section~\ref{sec:adja}). In our construction, $M_S$ has no row of weight zero, and all rows have weight at most $3$. Therefore, $M_S$ can be encoded by a hypergraph $\mathcal{H}$ whose vertices correspond to the columns of $M_S$ (that is, the elements of $S$), and whose hyperedges correspond to the rows of $M_S$. More precisely, using the notation from Section~\ref{sec:adja}:
\begin{itemize}
\item hyperedges of size $1$ correspond to rows of weight $1$ (elements of $D_1$),
\item hyperedges of size $2$ correspond to rows of weight $2$ (elements of $D_2$),
\item hyperedges of size $3$ correspond to rows of weight $3$ (elements of $D_3$).
\end{itemize}
Thus, we describe the hypergraph $\mathcal{H}$ instead of the matrix $M_S$.

In our construction, the hypergraph $\mathcal{H}$ is represented in Figures~\ref{figA},~\ref{figB} and~\ref{figC}, depending on the congruence class of $q \pmod{3}$. For instance, when $q\equiv 2\pmod{3}$, one should refer to Figure~\ref{figC}. For illustration, the case $q=3$ is shown in Figure~\ref{figD}.

The hypergraph $\mathcal{H}$ has $8q$ vertices, represented either by solid green nodes or by black circles. There are $6q$ solid green vertices and $2q$ black vertices. For example, when $q=3$, $\mathcal{H}$ has $24$ vertices, of which $18$ are solid green and $6$ are black, as shown in Figure~\ref{figD}.

\begin{figure}[!ht]
\begin{tikzpicture}[scale=1.1,rotate=90] \node[circle,draw=green!60!black, fill=green!60!black,inner sep=1.6pt] (L1) at (-.5,0) {}; \node[circle,draw=black,inner sep=1.6pt] (R1) at (.5,0){}; \node[circle,draw=green!60!black, fill=green!60!black,inner sep=1.6pt] (L2) at (-.5,-1) {}; \node[circle,draw=black,inner sep=1.6pt] (R2) at (.5,-1){}; \draw[thick,blue] (L1) -- (R1); \draw[thick,blue] (L2)--(R2); \draw[thick,blue] (R1)--(R2); \node[circle,draw=green!60!black, fill=green!60!black,inner sep=1.6pt] (L9) at (-.5,-2) {}; \node[circle,draw=black,inner sep=1.6pt] (R9) at (.5,-2){}; \node[circle,draw=green!60!black, fill=green!60!black,inner sep=1.6pt] (L10) at (-.5,-3) {}; \node[circle,draw=black,inner sep=1.6pt] (R10) at (.5,-3){}; \node[circle,draw=black,inner sep=1.6pt] (R11) at (1.5,-3){}; \draw[thick,blue] (L9)--(R9); \draw[thick,blue] (L10)--(R10); \draw[thick,blue] (R9)--(R10); \draw[thick,blue] (R10)--(R11); \node[circle,draw=green!60!black, fill=green!60!black,inner sep=1.6pt] (L11) at (-.5,-4) {}; \node[circle,draw=black,inner sep=1.6pt] (R12) at (.5,-4) {}; \draw[thick,blue] (L11)--(R12);
\node[circle,draw=green!60!black, fill=green!60!black,inner sep=1.6pt] (L12) at (-.5,-5) {}; \node[circle,draw=green!60!black, fill=green!60!black,inner sep=1.6pt] (L13) at (-.5,-6) {}; \node[circle,draw=green!60!black, fill=green!60!black,inner sep=1.6pt] (L14) at (.5,-5) {}; \draw[thick,blue] (L12)--(L13); \node[circle,draw=green!60!black, fill=green!60!black,inner sep=1.6pt] (RR1) at (1.5,0) {}; \node[circle,draw=green!60!black, fill=green!60!black,inner sep=1.6pt] (RR1b) at (2.5,0) {}; \node[circle,draw=green!60!black, fill=green!60!black,inner sep=1.6pt] (RR2) at (1.5,-1) {}; \node[circle,draw=green!60!black, fill=green!60!black,inner sep=1.6pt] (RR2b) at (2.5,-1) {}; \node[circle,draw=green!60!black, fill=green!60!black,inner sep=1.6pt] (RR9) at (1.5,-2) {}; \node[circle,draw=green!60!black, fill=green!60!black,inner sep=1.6pt] (RR9b) at (2.5,-2) {}; \node[circle,draw=green!60!black, fill=green!60!black,inner sep=1.6pt] (RR10) at (2.5,-3) {}; \node[circle,draw=green!60!black, fill=green!60!black,inner sep=1.6pt] (RR10b) at (3.5,-3) {}; \node[circle,draw=green!60!black, fill=green!60!black,inner sep=1.6pt] (RR11) at (1.5,-4) {}; \node[circle,draw=green!60!black, fill=green!60!black,inner sep=1.6pt] (RR11b) at (2.5,-4) {};
\draw[thick,decorate,decoration={snake,amplitude=0.1mm, segment length=1mm}] (R1)--(RR1)--(RR1b); \draw[thick,decorate,decoration={snake,amplitude=0.1mm, segment length=1mm}] (R2)--(RR2)--(RR2b); \draw[thick,decorate,decoration={snake,amplitude=0.1mm, segment length=1mm}] (R9)--(RR9)--(RR9b); \draw[thick,decorate,decoration={snake,amplitude=0.1mm, segment length=1mm}] (R11)--(RR10)--(RR10b); \draw[thick,decorate,decoration={snake,amplitude=0.1mm, segment length=1mm}] (R12)--(RR11)--(RR11b); \draw[thick,red,decorate,decoration={snake,amplitude=0.1mm, segment length=1mm}] (L1)--(L2)--(L9); \draw[thick,red,decorate,decoration={snake,amplitude=0.1mm, segment length=1mm}] (L10)--(L11)--(L12); \draw[thick,red,decorate,decoration={snake,amplitude=0.1mm, segment length=1mm}] (L13)to[out=130, in=150,looseness=1.4](RR1)--(RR2); \draw[thick,red,decorate,decoration={snake,amplitude=0.1mm, segment length=1mm}] (RR1b)--(RR2b)--(RR9); \draw[thick,red,decorate,decoration={snake,amplitude=0.1mm, segment length=1mm}] (RR9b)--(RR10)--(RR11); \draw[thick,red,decorate,decoration={snake,amplitude=0.1mm, segment length=1mm}] (L14)--(RR11b)--(RR10b); \draw[thick,color=yellow!70!black,decorate,decoration={snake,amplitude=0.1mm, segment length=1mm}] (L14) --(R12)--(R11); \end{tikzpicture} \caption{The hypergraph when $q=3$}\label{figD}
\end{figure}

We now describe the hyperedges of $\mathcal{H}$.

\begin{itemize}
\item $\mathcal{H}$ has $6q$ hyperedges of cardinality $1$, corresponding to the elements of $D_1$. In the figures, these are represented by the solid green vertices. In particular, this set coincides with the set $A$ introduced in Section~\ref{sec:adja}, while the black vertices correspond to the set $B$.

\item $\mathcal{H}$ has $3q$ hyperedges of cardinality $2$, represented by solid blue edges. These edges form a structured pattern consisting of two disjoint edges, $q-2$ paths of length $3$, and one distinguished path of length $3$ with an additional edge attached at one of its internal vertices. 

In terms of the notation of Sections~\ref{sec:adja} and~\ref{sec:k=3}, we have  $e(A,A)=1$, $e(A,B)=2q-1$ and $e(B,B)=q$.
To clarify this structure, we have drawn only the blue edges of $\mathcal{H}$ in Figure~\ref{figDD}. Note that the placement of the two disjoint edges depends on the drawing: in Figure~\ref{figA} they both appear on the right-hand side (with one of them lying inside a hyperedge of cardinality $3$), whereas in Figure~\ref{figB} they appear on opposite sides of the diagram.
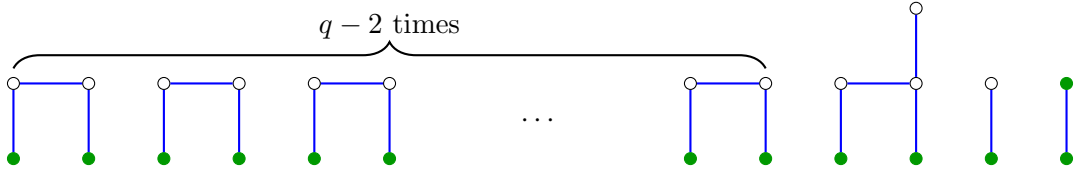
\begin{figure}[!ht]
\begin{tikzpicture}
\node[circle,draw=black,inner sep=1.6pt] (R1) at (-10,.5){};
\node[circle,draw=green!60!black, fill=green!60!black,inner sep=1.6pt] (S1) at (-10,-.5){};
\node[circle,draw=black,inner sep=1.6pt] (R2) at (-9,.5){};
\node[circle,draw=green!60!black, fill=green!60!black,inner sep=1.6pt] (S2) at (-9,-.5){};
\draw[thick,blue] (R1)--(S1);\draw[thick,blue] (R2)--(S2);\draw[thick,blue] (R1)--(R2);

\node[circle,draw=black,inner sep=1.6pt] (R3) at (-8,.5){};
\node[circle,draw=green!60!black, fill=green!60!black,inner sep=1.6pt] (S3) at (-8,-.5){};
\node[circle,draw=black,inner sep=1.6pt] (R4) at (-7,.5){};
\node[circle,draw=green!60!black, fill=green!60!black,inner sep=1.6pt] (S4) at (-7,-.5){};
\draw[thick,blue] (R3)--(S3);\draw[thick,blue] (R4)--(S4);\draw[thick,blue] (R3)--(R4);

\node[circle,draw=black,inner sep=1.6pt] (R5) at (-6,.5){};
\node[circle,draw=green!60!black, fill=green!60!black,inner sep=1.6pt] (S5) at (-6,-.5){};
\node[circle,draw=black,inner sep=1.6pt] (R6) at (-5,.5){};
\node[circle,draw=green!60!black, fill=green!60!black,inner sep=1.6pt] (S6) at (-5,-.5){};
\draw[thick,blue] (R5)--(S5);\draw[thick,blue] (R6)--(S6);\draw[thick,blue] (R5)--(R6);

\node at (-3,0) {$\cdots$};

\node[circle,draw=black,inner sep=1.6pt] (R7) at (-1,.5){};
\node[circle,draw=green!60!black, fill=green!60!black,inner sep=1.6pt] (S7) at (-1,-.5){};
\node[circle,draw=black,inner sep=1.6pt] (R8) at (0,.5){};
\node[circle,draw=green!60!black, fill=green!60!black,inner sep=1.6pt] (S8) at (0,-.5){};
\draw[thick,blue] (R7)--(S7);\draw[thick,blue] (R8)--(S8);\draw[thick,blue] (R7)--(R8);

\node[circle,draw=black,inner sep=1.6pt] (R9) at (1,.5){};
\node[circle,draw=green!60!black, fill=green!60!black,inner sep=1.6pt] (S9) at (1,-.5){};
\node[circle,draw=black,inner sep=1.6pt] (R10) at (2,.5){};
\node[circle,draw=black,inner sep=1.6pt] (R11) at (2,1.5){};
\node[circle,draw=green!60!black, fill=green!60!black,inner sep=1.6pt] (S10) at (2,-.5){};
\draw[thick,blue] (R9)--(S9);\draw[thick,blue] (R10)--(S10);\draw[thick,blue] (R9)--(R10);\draw[thick,blue](R11)--(R10);

\node[circle,draw=black,inner sep=1.6pt] (R12) at (3,.5){};
\node[circle,draw=green!60!black, fill=green!60!black,inner sep=1.6pt] (S12) at (3,-.5){};

\node[circle,draw=green!60!black, fill=green!60!black,inner sep=1.6pt] (R13) at (4,.5){};
\node[circle,draw=green!60!black, fill=green!60!black,inner sep=1.6pt] (S13) at (4,-.5){};
\draw[thick,blue](R12)--(S12);\draw[thick,blue](R13)--(S13);
\draw [thick, decorate,decoration={brace,amplitude=10pt}] (-10,.7) -- (0,.7) node [black,midway,yshift=16pt] {$q-2$ times};
\end{tikzpicture}
\caption{The underlying graph for the hyperedges of cardinality $2$ of $\mathcal{H}$}\label{figDD}
\end{figure}

\item $\mathcal{H}$ has $4q$ hyperedges of cardinality $3$, represented by coloured (red, black, or yellow) curves in the figures. These are distributed as follows:
\begin{itemize}
\item exactly one hyperedge of type $ABB$ (the yellow one), so $e(A,B,B)=1$,
\item $2q-1$ hyperedges of type $AAB$ (the black ones), so $e(A,A,B)=2q-1$,
\item the remaining $2q$ hyperedges are of type $AAA$ (the red ones), and they partition the set $A$. Thus $e(A,A,A)=2q$.
\end{itemize}
\end{itemize}

Summarizing, $\mathcal{H}$ has
$6q+3q+4q=13q$
hyperedges, and hence $M_S$ has $13q$ rows and $8q$ columns. Moreover, as one can easily verify from Figures~\ref{figA},~\ref{figB} and~\ref{figC}, each column of $M_S$ contains exactly three ones, since every vertex of $\mathcal{H}$ is incident with exactly three hyperedges. Therefore, $S$ is indeed a collection of $3$-subsets.

It remains to prove that $S$ is separating. This follows from a case-by-case analysis. We outline the main ideas and omit routine verifications.
Suppose, for a contradiction, that there exist two distinct $3$-subsets of indices $\{i_1,i_2,i_3\}$ and $\{j_1,j_2,j_3\}$ such that
\begin{equation}\label{last}
(M_S)_{i_1}+(M_S)_{i_2}+(M_S)_{i_3}
=
(M_S)_{j_1}+(M_S)_{j_2}+(M_S)_{j_3}.
\end{equation}

First assume that all indices lie in $D_1\cup D_2$, that is, no row involved has weight $3$. Then~\eqref{last} corresponds to a configuration in $\mathcal{H}$ involving only hyperedges of size $1$ and $2$. One checks that this would force one of the following configurations:
\begin{itemize}
\item a cycle of length $6$ consisting of blue edges,
\item a $4$-cycle of blue edges together with a disjoint blue edge,
\item a path of length $4$ of blue edges whose endvertices lie in $A$,
\item at least two distinct blue edges joining vertices in $A$.
\end{itemize}
A direct inspection of $\mathcal{H}$ shows that none of these configurations occurs, and hence~\eqref{last} is impossible in this case.

The case where~\eqref{last} involves rows of weight $3$ is treated similarly, by analyzing the interaction between hyperedges of size $3$ and those of size $1$ and $2$. 
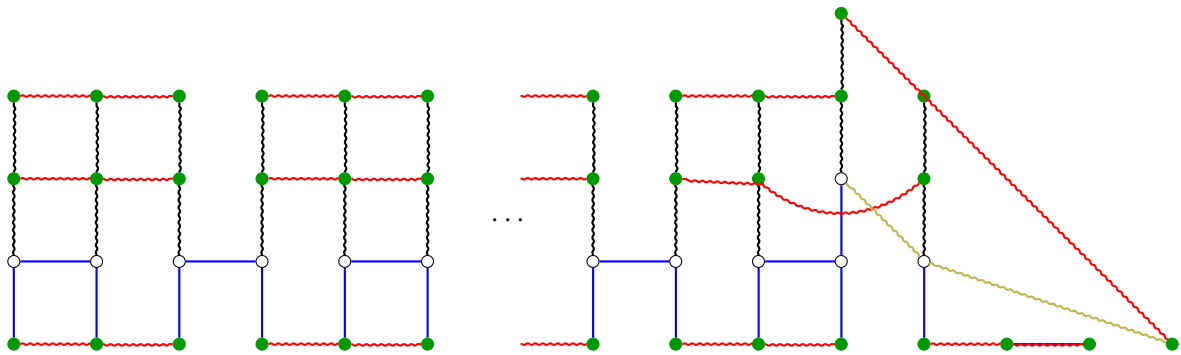
\begin{figure}[!ht]
\begin{tikzpicture}[scale=1.1,rotate=90]
\node[circle,draw=green!60!black, fill=green!60!black,inner sep=1.6pt] (L1) at (-.5,0) {};
\node[circle,draw=black,inner sep=1.6pt] (R1) at (.5,0){};
\node[circle,draw=green!60!black, fill=green!60!black,inner sep=1.6pt] (L2) at (-.5,-1) {};
\node[circle,draw=black,inner sep=1.6pt] (R2) at (.5,-1){};
\node[circle,draw=green!60!black, fill=green!60!black,inner sep=1.6pt] (L3) at (-.5,-2) {};
\node[circle,draw=black,inner sep=1.6pt] (R3) at (.5,-2){};
\node[circle,draw=green!60!black, fill=green!60!black,inner sep=1.6pt] (L4) at (-.5,-3) {};
\node[circle,draw=black,inner sep=1.6pt] (R4) at (.5,-3){};
\node[circle,draw=green!60!black, fill=green!60!black,inner sep=1.6pt] (L5) at (-.5,-4) {};
\node[circle,draw=black,inner sep=1.6pt] (R5) at (.5,-4){};
\node[circle,draw=green!60!black, fill=green!60!black,inner sep=1.6pt] (L6) at (-.5,-5) {};
\node[circle,draw=black,inner sep=1.6pt] (R6) at (.5,-5){};
\draw[thick,blue] (L1) -- (R1);
\draw[thick,blue] (L2)--(R2);
\draw[thick,blue] (R1)--(R2);
\draw[thick,blue] (L3) -- (R3);
\draw[thick,blue] (L4)--(R4);
\draw[thick,blue] (R3)--(R4);
\draw[thick,blue] (L5) -- (R5);
\draw[thick,blue] (L6)--(R6);
\draw[thick,blue] (R5)--(R6);

\node at (1,-6) {$\cdots$};

\node[circle,draw=green!60!black, fill=green!60!black,inner sep=1.6pt] (L7) at (-.5,-7) {};
\node[circle,draw=black,inner sep=1.6pt] (R7) at (.5,-7){};
\node[circle,draw=green!60!black, fill=green!60!black,inner sep=1.6pt] (L8) at (-.5,-8) {};
\node[circle,draw=black,inner sep=1.6pt] (R8) at (.5,-8){};
\draw[thick,blue] (L7)--(R7);
\draw[thick,blue] (L8)--(R8);
\draw[thick,blue] (R7)--(R8);

\node[circle,draw=green!60!black, fill=green!60!black,inner sep=1.6pt] (L9) at (-.5,-9) {};
\node[circle,draw=black,inner sep=1.6pt] (R9) at (.5,-9){};
\node[circle,draw=green!60!black, fill=green!60!black,inner sep=1.6pt] (L10) at (-.5,-10) {};
\node[circle,draw=black,inner sep=1.6pt] (R10) at (.5,-10){};
\node[circle,draw=black,inner sep=1.6pt] (R11) at (1.5,-10){};
\draw[thick,blue] (L9)--(R9);
\draw[thick,blue] (L10)--(R10);
\draw[thick,blue] (R9)--(R10);
\draw[thick,blue] (R10)--(R11);

\node[circle,draw=green!60!black, fill=green!60!black,inner sep=1.6pt] (L11) at (-.5,-11) {};
\node[circle,draw=black,inner sep=1.6pt] (R12) at (.5,-11) {};
\draw[thick,blue] (L11)--(R12);

\node[circle,draw=green!60!black, fill=green!60!black,inner sep=1.6pt] (L12) at (-.5,-12) {};
\node[circle,draw=green!60!black, fill=green!60!black,inner sep=1.6pt] (L13) at (-.5,-13) {};
\node[circle,draw=green!60!black, fill=green!60!black,inner sep=1.6pt] (L14) at (-.5,-14) {};
\draw[thick,blue] (L12)--(L13);

\node[circle,draw=green!60!black, fill=green!60!black,inner sep=1.6pt] (RR1) at (1.5,0) {};
\node[circle,draw=green!60!black, fill=green!60!black,inner sep=1.6pt] (RR1b) at (2.5,0) {};
\node[circle,draw=green!60!black, fill=green!60!black,inner sep=1.6pt] (RR2) at (1.5,-1) {};
\node[circle,draw=green!60!black, fill=green!60!black,inner sep=1.6pt] (RR2b) at (2.5,-1) {};
\node[circle,draw=green!60!black, fill=green!60!black,inner sep=1.6pt] (RR3) at (1.5,-2) {};
\node[circle,draw=green!60!black, fill=green!60!black,inner sep=1.6pt] (RR3b) at (2.5,-2) {};
\node[circle,draw=green!60!black, fill=green!60!black,inner sep=1.6pt] (RR4) at (1.5,-3) {};
\node[circle,draw=green!60!black, fill=green!60!black,inner sep=1.6pt] (RR4b) at (2.5,-3) {};
\node[circle,draw=green!60!black, fill=green!60!black,inner sep=1.6pt] (RR5) at (1.5,-4) {};
\node[circle,draw=green!60!black, fill=green!60!black,inner sep=1.6pt] (RR5b) at (2.5,-4) {};
\node[circle,draw=green!60!black, fill=green!60!black,inner sep=1.6pt] (RR6) at (1.5,-5) {};
\node[circle,draw=green!60!black, fill=green!60!black,inner sep=1.6pt] (RR6b) at (2.5,-5) {};

\node[circle,draw=green!60!black, fill=green!60!black,inner sep=1.6pt] (RR7) at (1.5,-7) {};
\node[circle,draw=green!60!black, fill=green!60!black,inner sep=1.6pt] (RR7b) at (2.5,-7) {};
\node[circle,draw=green!60!black, fill=green!60!black,inner sep=1.6pt] (RR8) at (1.5,-8) {};
\node[circle,draw=green!60!black, fill=green!60!black,inner sep=1.6pt] (RR8b) at (2.5,-8) {};
\node[circle,draw=green!60!black, fill=green!60!black,inner sep=1.6pt] (RR9) at (1.5,-9) {};
\node[circle,draw=green!60!black, fill=green!60!black,inner sep=1.6pt] (RR9b) at (2.5,-9) {};

\node[circle,draw=green!60!black, fill=green!60!black,inner sep=1.6pt] (RR10) at (2.5,-10) {};
\node[circle,draw=green!60!black, fill=green!60!black,inner sep=1.6pt] (RR10b) at (3.5,-10) {};
\node[circle,draw=green!60!black, fill=green!60!black,inner sep=1.6pt] (RR11) at (1.5,-11) {};
\node[circle,draw=green!60!black, fill=green!60!black,inner sep=1.6pt] (RR11b) at (2.5,-11) {};
\draw[thick,red,decorate,decoration={snake,amplitude=0.1mm, segment length=1mm}] (L1)--(L2)--(L3);
\draw[thick,red,decorate,decoration={snake,amplitude=0.1mm, segment length=1mm}] (L4)--(L5)--(L6);
\draw[thick,red,decorate,decoration={snake,amplitude=0.1mm, segment length=1mm}] (L11)--(L12)--(L13);
\draw[thick,red,decorate,decoration={snake,amplitude=0.1mm, segment length=1mm}] (L8)--(L9)--(L10);
\node (L7-) at (-.5,-6){};
\node (L7--) at (1.5,-6){};
\node (L7---) at (2.5,-6){};
\draw[thick,red,decorate,decoration={snake,amplitude=0.1mm, segment length=1mm}] (L7)--(L7-);
\draw[thick,decorate,decoration={snake,amplitude=0.1mm, segment length=1mm}] (R1)--(RR1)--(RR1b);
\draw[thick,decorate,decoration={snake,amplitude=0.1mm, segment length=1mm}] (R2)--(RR2)--(RR2b);
\draw[thick,decorate,decoration={snake,amplitude=0.1mm, segment length=1mm}] (R3)--(RR3)--(RR3b);
\draw[thick,decorate,decoration={snake,amplitude=0.1mm, segment length=1mm}] (R4)--(RR4)--(RR4b);
\draw[thick,decorate,decoration={snake,amplitude=0.1mm, segment length=1mm}] (R5)--(RR5)--(RR5b);
\draw[thick,decorate,decoration={snake,amplitude=0.1mm, segment length=1mm}] (R6)--(RR6)--(RR6b);
\draw[thick,decorate,decoration={snake,amplitude=0.1mm, segment length=1mm}] (R7)--(RR7)--(RR7b);
\draw[thick,decorate,decoration={snake,amplitude=0.1mm, segment length=1mm}] (R8)--(RR8)--(RR8b);
\draw[thick,decorate,decoration={snake,amplitude=0.1mm, segment length=1mm}] (R9)--(RR9)--(RR9b);
\draw[thick,decorate,decoration={snake,amplitude=0.1mm, segment length=1mm}] (R11)--(RR10)--(RR10b);
\draw[thick,decorate,decoration={snake,amplitude=0.1mm, segment length=1mm}] (R12)--(RR11)--(RR11b);

\draw[thick,red,decorate,decoration={snake,amplitude=0.1mm, segment length=1mm}] (RR1)--(RR2)--(RR3);
\draw[thick,red,decorate,decoration={snake,amplitude=0.1mm, segment length=1mm}] (RR1b)--(RR2b)--(RR3b);
\draw[thick,red,decorate,decoration={snake,amplitude=0.1mm, segment length=1mm}] (RR4)--(RR5)--(RR6);
\draw[thick,red,decorate,decoration={snake,amplitude=0.1mm, segment length=1mm}] (RR4b)--(RR5b)--(RR6b);

\draw[thick,red,decorate,decoration={snake,amplitude=0.1mm, segment length=1mm}] (L14)--(RR10b);
\draw[thick,red,decorate,decoration={snake,amplitude=0.1mm, segment length=1mm}] (RR11) to[bend left=40](RR9)--(RR8);
\draw[thick,red,decorate,decoration={snake,amplitude=0.1mm, segment length=1mm}] (RR8b) --(RR9b)--(RR10);
\draw[thick,red,decorate,decoration={snake,amplitude=0.1mm, segment length=1mm}] (RR7) --(L7--);
\draw[thick,red,decorate,decoration={snake,amplitude=0.1mm, segment length=1mm}] (RR7b) --(L7---);
\draw[thick,color=yellow!70!black,decorate,decoration={snake,amplitude=0.1mm, segment length=1mm}] (L14) --(R12)--(R11);
\end{tikzpicture}
\caption{The hypergraph when $q\equiv 1\pmod 3$}\label{figA}
\end{figure}

\begin{figure}[!ht]
\begin{tikzpicture}[scale=1,rotate=90]
\node[circle,draw=green!60!black, fill=green!60!black,inner sep=1.6pt] (L1) at (-.5,0) {};
\node[circle,draw=black,inner sep=1.6pt] (R1) at (.5,0){};
\node[circle,draw=green!60!black, fill=green!60!black,inner sep=1.6pt] (L2) at (-.5,-1) {};
\node[circle,draw=black,inner sep=1.6pt] (R2) at (.5,-1){};
\node[circle,draw=green!60!black, fill=green!60!black,inner sep=1.6pt] (L3) at (-.5,-2) {};
\node[circle,draw=black,inner sep=1.6pt] (R3) at (.5,-2){};
\node[circle,draw=green!60!black, fill=green!60!black,inner sep=1.6pt] (L4) at (-.5,-3) {};
\node[circle,draw=black,inner sep=1.6pt] (R4) at (.5,-3){};
\node[circle,draw=green!60!black, fill=green!60!black,inner sep=1.6pt] (L5) at (-.5,-4) {};
\node[circle,draw=black,inner sep=1.6pt] (R5) at (.5,-4){};
\node[circle,draw=green!60!black, fill=green!60!black,inner sep=1.6pt] (L6) at (-.5,-5) {};
\node[circle,draw=black,inner sep=1.6pt] (R6) at (.5,-5){};
\draw[thick,blue] (L1) -- (R1);
\draw[thick,blue] (L2)--(R2);
\draw[thick,blue] (R1)--(R2);
\draw[thick,blue] (L3) -- (R3);
\draw[thick,blue] (L4)--(R4);
\draw[thick,blue] (R3)--(R4);
\draw[thick,blue] (L5) -- (R5);
\draw[thick,blue] (L6)--(R6);
\draw[thick,blue] (R5)--(R6);

\node at (1,-6) {$\cdots$};

\node[circle,draw=green!60!black, fill=green!60!black,inner sep=1.6pt] (L7) at (-.5,-7) {};
\node[circle,draw=black,inner sep=1.6pt] (R7) at (.5,-7){};
\node[circle,draw=green!60!black, fill=green!60!black,inner sep=1.6pt] (L8) at (-.5,-8) {};
\node[circle,draw=black,inner sep=1.6pt] (R8) at (.5,-8){};
\draw[thick,blue] (L7)--(R7);
\draw[thick,blue] (L8)--(R8);
\draw[thick,blue] (R7)--(R8);

\node[circle,draw=green!60!black, fill=green!60!black,inner sep=1.6pt] (L9) at (-.5,-9) {};
\node[circle,draw=black,inner sep=1.6pt] (R9) at (.5,-9){};
\node[circle,draw=green!60!black, fill=green!60!black,inner sep=1.6pt] (L10) at (-.5,-10) {};
\node[circle,draw=black,inner sep=1.6pt] (R10) at (.5,-10){};
\node[circle,draw=black,inner sep=1.6pt] (R11) at (1.5,-10){};
\draw[thick,blue] (L9)--(R9);
\draw[thick,blue] (L10)--(R10);
\draw[thick,blue] (R9)--(R10);
\draw[thick,blue] (R10)--(R11);

\node[circle,draw=green!60!black, fill=green!60!black,inner sep=1.6pt] (L11) at (-.5,-11) {};
\node[circle,draw=black,inner sep=1.6pt] (R12) at (.5,-11) {};
\draw[thick,blue] (L11)--(R12);

\node[circle,draw=green!60!black, fill=green!60!black,inner sep=1.6pt] (L12) at (-.5,-12) {};
\node[circle,draw=green!60!black, fill=green!60!black,inner sep=1.6pt] (L13) at (-.5,-13) {};
\node[circle,draw=green!60!black, fill=green!60!black,inner sep=1.6pt] (L14) at (.5,-12) {};
\draw[thick,blue] (L12)--(L13);

\node[circle,draw=green!60!black, fill=green!60!black,inner sep=1.6pt] (RR1) at (1.5,0) {};
\node[circle,draw=green!60!black, fill=green!60!black,inner sep=1.6pt] (RR1b) at (2.5,0) {};
\node[circle,draw=green!60!black, fill=green!60!black,inner sep=1.6pt] (RR2) at (1.5,-1) {};
\node[circle,draw=green!60!black, fill=green!60!black,inner sep=1.6pt] (RR2b) at (2.5,-1) {};
\node[circle,draw=green!60!black, fill=green!60!black,inner sep=1.6pt] (RR3) at (1.5,-2) {};
\node[circle,draw=green!60!black, fill=green!60!black,inner sep=1.6pt] (RR3b) at (2.5,-2) {};
\node[circle,draw=green!60!black, fill=green!60!black,inner sep=1.6pt] (RR4) at (1.5,-3) {};
\node[circle,draw=green!60!black, fill=green!60!black,inner sep=1.6pt] (RR4b) at (2.5,-3) {};
\node[circle,draw=green!60!black, fill=green!60!black,inner sep=1.6pt] (RR5) at (1.5,-4) {};
\node[circle,draw=green!60!black, fill=green!60!black,inner sep=1.6pt] (RR5b) at (2.5,-4) {};
\node[circle,draw=green!60!black, fill=green!60!black,inner sep=1.6pt] (RR6) at (1.5,-5) {};
\node[circle,draw=green!60!black, fill=green!60!black,inner sep=1.6pt] (RR6b) at (2.5,-5) {};

\node[circle,draw=green!60!black, fill=green!60!black,inner sep=1.6pt] (RR7) at (1.5,-7) {};
\node[circle,draw=green!60!black, fill=green!60!black,inner sep=1.6pt] (RR7b) at (2.5,-7) {};
\node[circle,draw=green!60!black, fill=green!60!black,inner sep=1.6pt] (RR8) at (1.5,-8) {};
\node[circle,draw=green!60!black, fill=green!60!black,inner sep=1.6pt] (RR8b) at (2.5,-8) {};
\node[circle,draw=green!60!black, fill=green!60!black,inner sep=1.6pt] (RR9) at (1.5,-9) {};
\node[circle,draw=green!60!black, fill=green!60!black,inner sep=1.6pt] (RR9b) at (2.5,-9) {};

\node[circle,draw=green!60!black, fill=green!60!black,inner sep=1.6pt] (RR10) at (2.5,-10) {};
\node[circle,draw=green!60!black, fill=green!60!black,inner sep=1.6pt] (RR10b) at (3.5,-10) {};
\node[circle,draw=green!60!black, fill=green!60!black,inner sep=1.6pt] (RR11) at (1.5,-11) {};
\node[circle,draw=green!60!black, fill=green!60!black,inner sep=1.6pt] (RR11b) at (2.5,-11) {};
\draw[thick,red,decorate,decoration={snake,amplitude=0.1mm, segment length=1mm}] (L1)--(L2)--(L3);
\draw[thick,red,decorate,decoration={snake,amplitude=0.1mm, segment length=1mm}] (L4)--(L5)--(L6);
\draw[thick,red,decorate,decoration={snake,amplitude=0.1mm, segment length=1mm}] (L10)--(L11)--(L12);
\draw[thick,red,decorate,decoration={snake,amplitude=0.1mm, segment length=1mm}] (L7)--(L8)--(L9);
\node (L7-) at (-.5,-6){};
\node (L7--) at (1.5,-6){};
\node (L7---) at (2.5,-6){};
\draw[thick,decorate,decoration={snake,amplitude=0.1mm, segment length=1mm}] (R1)--(RR1)--(RR1b);
\draw[thick,decorate,decoration={snake,amplitude=0.1mm, segment length=1mm}] (R2)--(RR2)--(RR2b);
\draw[thick,decorate,decoration={snake,amplitude=0.1mm, segment length=1mm}] (R3)--(RR3)--(RR3b);
\draw[thick,decorate,decoration={snake,amplitude=0.1mm, segment length=1mm}] (R4)--(RR4)--(RR4b);
\draw[thick,decorate,decoration={snake,amplitude=0.1mm, segment length=1mm}] (R5)--(RR5)--(RR5b);
\draw[thick,decorate,decoration={snake,amplitude=0.1mm, segment length=1mm}] (R6)--(RR6)--(RR6b);
\draw[thick,decorate,decoration={snake,amplitude=0.1mm, segment length=1mm}] (R7)--(RR7)--(RR7b);
\draw[thick,decorate,decoration={snake,amplitude=0.1mm, segment length=1mm}] (R8)--(RR8)--(RR8b);
\draw[thick,decorate,decoration={snake,amplitude=0.1mm, segment length=1mm}] (R9)--(RR9)--(RR9b);
\draw[thick,decorate,decoration={snake,amplitude=0.1mm, segment length=1mm}] (R11)--(RR10)--(RR10b);
\draw[thick,decorate,decoration={snake,amplitude=0.1mm, segment length=1mm}] (R12)--(RR11)--(RR11b);

\draw[thick,red,decorate,decoration={snake,amplitude=0.1mm, segment length=1mm}] (L13)to[out=0, in=30,looseness=1.3](RR1)--(RR2);
\draw[thick,red,decorate,decoration={snake,amplitude=0.1mm, segment length=1mm}] (RR1b)--(RR2b)--(RR3);
\draw[thick,red,decorate,decoration={snake,amplitude=0.1mm, segment length=1mm}] (RR3b)--(RR4)--(RR5);
\draw[thick,red,decorate,decoration={snake,amplitude=0.1mm, segment length=1mm}] (RR4b)--(RR5b)--(RR6);
\draw[thick,red,decorate,decoration={snake,amplitude=0.1mm, segment length=1mm}] (RR6b) --(L7--);

\draw[thick,red,decorate,decoration={snake,amplitude=0.1mm, segment length=1mm}] (RR8)--(RR7)--(L7---);
\draw[thick,red,decorate,decoration={snake,amplitude=0.1mm, segment length=1mm}] (RR7b)--(RR8b)--(RR9);
\draw[thick,red,decorate,decoration={snake,amplitude=0.1mm, segment length=1mm}] (RR9b) --(RR10)--(RR11);
\draw[thick,red,decorate,decoration={snake,amplitude=0.1mm, segment length=1mm}] (L14) --(RR11b)--(RR10b);
\draw[thick,color=yellow!70!black,decorate,decoration={snake,amplitude=0.1mm, segment length=1mm}] (L14) --(R12)--(R11);
\end{tikzpicture}
\caption{The hypergraph when $q\equiv 0\pmod 3$}\label{figB}
\end{figure}

\begin{figure}[!ht]
\begin{tikzpicture}[scale=1.1,rotate=90]
\node[circle,draw=green!60!black, fill=green!60!black,inner sep=1.6pt] (L1) at (-.5,0) {};
\node[circle,draw=black,inner sep=1.6pt] (R1) at (.5,0){};
\node[circle,draw=green!60!black, fill=green!60!black,inner sep=1.6pt] (L2) at (-.5,-1) {};
\node[circle,draw=black,inner sep=1.6pt] (R2) at (.5,-1){};
\node[circle,draw=green!60!black, fill=green!60!black,inner sep=1.6pt] (L3) at (-.5,-2) {};
\node[circle,draw=black,inner sep=1.6pt] (R3) at (.5,-2){};
\node[circle,draw=green!60!black, fill=green!60!black,inner sep=1.6pt] (L4) at (-.5,-3) {};
\node[circle,draw=black,inner sep=1.6pt] (R4) at (.5,-3){};
\node[circle,draw=green!60!black, fill=green!60!black,inner sep=1.6pt] (L5) at (-.5,-4) {};
\node[circle,draw=black,inner sep=1.6pt] (R5) at (.5,-4){};
\node[circle,draw=green!60!black, fill=green!60!black,inner sep=1.6pt] (L6) at (-.5,-5) {};
\node[circle,draw=black,inner sep=1.6pt] (R6) at (.5,-5){};
\draw[thick,blue] (L1) -- (R1);
\draw[thick,blue] (L2)--(R2);
\draw[thick,blue] (R1)--(R2);
\draw[thick,blue] (L3) -- (R3);
\draw[thick,blue] (L4)--(R4);
\draw[thick,blue] (R3)--(R4);
\draw[thick,blue] (L5) -- (R5);
\draw[thick,blue] (L6)--(R6);
\draw[thick,blue] (R5)--(R6);

\node at (1,-6) {$\cdots$};

\node[circle,draw=green!60!black, fill=green!60!black,inner sep=1.6pt] (L7) at (-.5,-7) {};
\node[circle,draw=black,inner sep=1.6pt] (R7) at (.5,-7){};
\node[circle,draw=green!60!black, fill=green!60!black,inner sep=1.6pt] (L8) at (-.5,-8) {};
\node[circle,draw=black,inner sep=1.6pt] (R8) at (.5,-8){};
\draw[thick,blue] (L7)--(R7);
\draw[thick,blue] (L8)--(R8);
\draw[thick,blue] (R7)--(R8);

\node[circle,draw=green!60!black, fill=green!60!black,inner sep=1.6pt] (L9) at (-.5,-9) {};
\node[circle,draw=black,inner sep=1.6pt] (R9) at (.5,-9){};
\node[circle,draw=green!60!black, fill=green!60!black,inner sep=1.6pt] (L10) at (-.5,-10) {};
\node[circle,draw=black,inner sep=1.6pt] (R10) at (.5,-10){};
\node[circle,draw=black,inner sep=1.6pt] (R11) at (1.5,-10){};
\draw[thick,blue] (L9)--(R9);
\draw[thick,blue] (L10)--(R10);
\draw[thick,blue] (R9)--(R10);
\draw[thick,blue] (R10)--(R11);

\node[circle,draw=green!60!black, fill=green!60!black,inner sep=1.6pt] (L11) at (-.5,-11) {};
\node[circle,draw=black,inner sep=1.6pt] (R12) at (.5,-11) {};
\draw[thick,blue] (L11)--(R12);

\node[circle,draw=green!60!black, fill=green!60!black,inner sep=1.6pt] (L12) at (1.5,1) {};
\node[circle,draw=green!60!black, fill=green!60!black,inner sep=1.6pt] (L13) at (2.5,1) {};
\node[circle,draw=green!60!black, fill=green!60!black,inner sep=1.6pt] (L14) at (-.5,-14) {};
\draw[thick,blue] (L12)--(L13);

\node[circle,draw=green!60!black, fill=green!60!black,inner sep=1.6pt] (RR1) at (1.5,0) {};
\node[circle,draw=green!60!black, fill=green!60!black,inner sep=1.6pt] (RR1b) at (2.5,0) {};
\node[circle,draw=green!60!black, fill=green!60!black,inner sep=1.6pt] (RR2) at (1.5,-1) {};
\node[circle,draw=green!60!black, fill=green!60!black,inner sep=1.6pt] (RR2b) at (2.5,-1) {};
\node[circle,draw=green!60!black, fill=green!60!black,inner sep=1.6pt] (RR3) at (1.5,-2) {};
\node[circle,draw=green!60!black, fill=green!60!black,inner sep=1.6pt] (RR3b) at (2.5,-2) {};
\node[circle,draw=green!60!black, fill=green!60!black,inner sep=1.6pt] (RR4) at (1.5,-3) {};
\node[circle,draw=green!60!black, fill=green!60!black,inner sep=1.6pt] (RR4b) at (2.5,-3) {};
\node[circle,draw=green!60!black, fill=green!60!black,inner sep=1.6pt] (RR5) at (1.5,-4) {};
\node[circle,draw=green!60!black, fill=green!60!black,inner sep=1.6pt] (RR5b) at (2.5,-4) {};
\node[circle,draw=green!60!black, fill=green!60!black,inner sep=1.6pt] (RR6) at (1.5,-5) {};
\node[circle,draw=green!60!black, fill=green!60!black,inner sep=1.6pt] (RR6b) at (2.5,-5) {};

\node[circle,draw=green!60!black, fill=green!60!black,inner sep=1.6pt] (RR7) at (1.5,-7) {};
\node[circle,draw=green!60!black, fill=green!60!black,inner sep=1.6pt] (RR7b) at (2.5,-7) {};
\node[circle,draw=green!60!black, fill=green!60!black,inner sep=1.6pt] (RR8) at (1.5,-8) {};
\node[circle,draw=green!60!black, fill=green!60!black,inner sep=1.6pt] (RR8b) at (2.5,-8) {};
\node[circle,draw=green!60!black, fill=green!60!black,inner sep=1.6pt] (RR9) at (1.5,-9) {};
\node[circle,draw=green!60!black, fill=green!60!black,inner sep=1.6pt] (RR9b) at (2.5,-9) {};

\node[circle,draw=green!60!black, fill=green!60!black,inner sep=1.6pt] (RR10) at (2.5,-10) {};
\node[circle,draw=green!60!black, fill=green!60!black,inner sep=1.6pt] (RR10b) at (3.5,-10) {};
\node[circle,draw=green!60!black, fill=green!60!black,inner sep=1.6pt] (RR11) at (1.5,-11) {};
\node[circle,draw=green!60!black, fill=green!60!black,inner sep=1.6pt] (RR11b) at (2.5,-11) {};
\draw[thick,red,decorate,decoration={snake,amplitude=0.1mm, segment length=1mm}] (L1)--(L2)--(L3);
\draw[thick,red,decorate,decoration={snake,amplitude=0.1mm, segment length=1mm}] (L4)--(L5)--(L6);
\draw[thick,red,decorate,decoration={snake,amplitude=0.1mm, segment length=1mm}] (L9)--(L10)--(L11);

\node (L7-) at (-.5,-6){};
\node (L7--) at (1.5,-6){};
\node (L7---) at (2.5,-6){};
\draw[thick,red,decorate,decoration={snake,amplitude=0.1mm, segment length=1mm}] (L8)--(L7)--(L7-);
\draw[thick,decorate,decoration={snake,amplitude=0.1mm, segment length=1mm}] (R1)--(RR1)--(RR1b);
\draw[thick,decorate,decoration={snake,amplitude=0.1mm, segment length=1mm}] (R2)--(RR2)--(RR2b);
\draw[thick,decorate,decoration={snake,amplitude=0.1mm, segment length=1mm}] (R3)--(RR3)--(RR3b);
\draw[thick,decorate,decoration={snake,amplitude=0.1mm, segment length=1mm}] (R4)--(RR4)--(RR4b);
\draw[thick,decorate,decoration={snake,amplitude=0.1mm, segment length=1mm}] (R5)--(RR5)--(RR5b);
\draw[thick,decorate,decoration={snake,amplitude=0.1mm, segment length=1mm}] (R6)--(RR6)--(RR6b);
\draw[thick,decorate,decoration={snake,amplitude=0.1mm, segment length=1mm}] (R7)--(RR7)--(RR7b);
\draw[thick,decorate,decoration={snake,amplitude=0.1mm, segment length=1mm}] (R8)--(RR8)--(RR8b);
\draw[thick,decorate,decoration={snake,amplitude=0.1mm, segment length=1mm}] (R9)--(RR9)--(RR9b);
\draw[thick,decorate,decoration={snake,amplitude=0.1mm, segment length=1mm}] (R11)--(RR10)--(RR10b);
\draw[thick,decorate,decoration={snake,amplitude=0.1mm, segment length=1mm}] (R12)--(RR11)--(RR11b);

\draw[thick,red,decorate,decoration={snake,amplitude=0.1mm, segment length=1mm}] (L12)--(RR1)--(RR2);
\draw[thick,red,decorate,decoration={snake,amplitude=0.1mm, segment length=1mm}] (L13)--(RR1b)--(RR3);
\draw[thick,red,decorate,decoration={snake,amplitude=0.1mm, segment length=1mm}] (RR2b)--(RR3b)--(RR4);
\draw[thick,red,decorate,decoration={snake,amplitude=0.1mm, segment length=1mm}] (RR4b)--(RR5)--(RR6);
\draw[thick,red,decorate,decoration={snake,amplitude=0.1mm, segment length=1mm}] (RR5b)--(RR6b)--(L7--);
\draw[thick,red,decorate,decoration={snake,amplitude=0.1mm, segment length=1mm}] (L14) --(RR11b)--(RR10b);

\draw[thick,red,decorate,decoration={snake,amplitude=0.1mm, segment length=1mm}] (RR11)--(RR10)--(RR9b);
\draw[thick,red,decorate,decoration={snake,amplitude=0.1mm, segment length=1mm}] (RR9)--(RR8b)--(RR7b);
\draw[thick,red,decorate,decoration={snake,amplitude=0.1mm, segment length=1mm}] (RR8) --(RR7)--(L7---);
\draw[thick,color=yellow!70!black,decorate,decoration={snake,amplitude=0.1mm, segment length=1mm}] (L14) --(R12)--(R11);
\end{tikzpicture}
\caption{The hypergraph when $q\equiv 2\pmod 3$}\label{figC}
\end{figure}
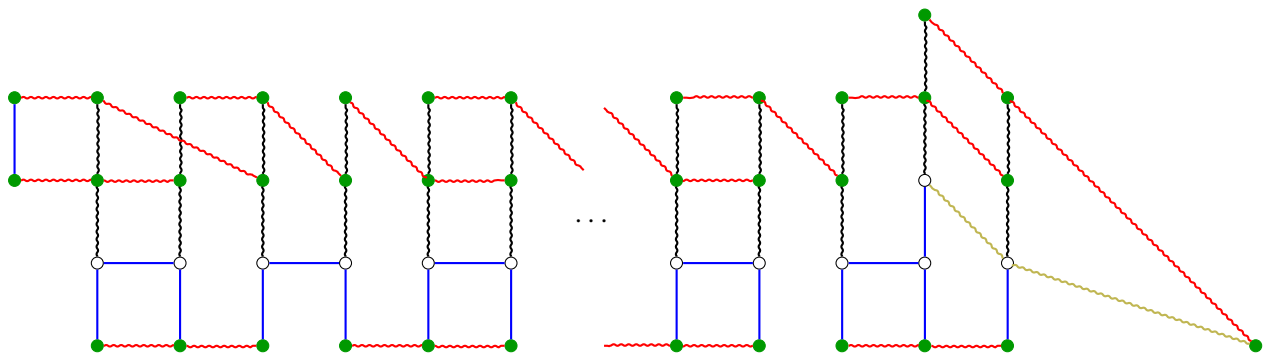

\thebibliography{10}
\bibitem{bayley}R.~F.~Bailey; P.~J.~Cameron, Base size, metric dimension and other invariants of groups and graphs
\textit{Bull. Lond. Math. Soc.} \textbf{43} (2011),  209--242.

\bibitem{bayleyy}R.~F.~Bailey, On the metric dimension of imprimitive
distance-regular graphs, \textit{Ann. Comb. }\textbf{20} (2016), 641--659.

\bibitem{baileyyy}R.~F.~Bailey, J.~C\'{a}ceres, D.~Garijo, A.~Gonz\'{a}lez,
A.~M\'{a}rquez, K.~Meagher, M.~L.~Puertas, Resolving sets for Johnson and Kneser graphs, \textit{Electoric J. Combin.} \textbf{34} (2013), 736--751.

\bibitem{Peter}P.~J.~Cameron, \textit{Permutation Groups}, London Mathematical Society Student Texts, Cambridge University Press, 1999. 

\bibitem{CF}P.~J.~Cameron, D.~G.~Fon-Der-Flaass, Bases for permutation groups and matroids, \textit{Eur. J. Comb.} \textbf{16} (1995), 537--544.
\bibitem{cameron}P.~J.~Cameron, Base size and separation number, \href{https://webspace.maths.qmul.ac.uk/p.j.cameron/csgnotes/basesep.pdf}{basesep.pdf}
\bibitem{ES}P.~Erd\H{o}s, H.~Sachs, 
Regul{\" a}re Graphen gegebener Taillenweite mit minimaler Knotenzahl,
\textit{Wiss. Z. Martin-Luther-Univ. Halle-Wittenberg Math.-Natur. Reihe} \textbf{12} (1963), 251--257.

\bibitem{FF}G.~Fijav\v{z}, B. Mohar, Rigidity and separation indices of Paley graphs, \textit{Discrete Math.} \textbf{289} 
(2004), 1157--161.
\bibitem{F}G.~Fijav\v{z}, B. Mohar, Separation and rigidity index of graphs on surfaces, \textit{Graphs Combin.} \textbf{26} 
(2010),  491--498.

\bibitem{halasi} Z.~Halasi, On the base size for the symmetric group acting on subsets. \textit{Studia Sci. Math. Hungar.} \textbf{49} (2012), 492--500.
\end{document}